\newtheorem{teo}{Theorem}[section]
\newtheorem{lema}[teo]{Lemma}
\newtheorem{prop}[teo]{Proposition}
\theoremstyle{definition}
\newtheorem*{xrem}{Remark}
\numberwithin{equation}{section}
\def\real{\mathbb{R}}
\def\e{\mbox{\normalfont e}}
\let\oldmarginpar\marginpar
\renewcommand\marginpar[1]{\-\oldmarginpar[\raggedleft\footnotesize #1]
{\raggedright\footnotesize #1}}
\renewcommand\theenumi{\@roman\c@enumi}\makeatother
\begin{document}

\title[Maximal functions associated to radial measures.]{Bounds for maximal functions associated to rotational invariant measures in high dimensions.}

\author[A. Criado]{Alberto Criado}
\address{Alberto Criado\\ Departamento de Matem\'aticas, Universidad Aut\'onoma de Madrid, 28049 Madrid, Spain}
\email{alberto.criado@uam.es}
\thanks{A.C. partially supported by MEC grant FPU-AP20050543 and DGU grants MTM2007-60952 and MTM2010-16518.}

\author[P. Sj\"ogren]{Peter Sj\"ogren}
\address{Peter Sj\"ogren\\ Mathematical Sciences, University of Gothenburg and Mathematical Sciences, Chalmers. SE-412 96 Gothenburg, Sweden.}
\email{peters@chalmers.se}

\date{}

\begin{abstract}
In recent articles (\cite{C}, \cite{AldazPerezLazaro}) it was proved that when $\mu$ is a finite, radial measure in $\real^n$ with a bounded, radially decreasing density, the $L^p(\mu)$ norm of the associated maximal operator $M_\mu$ grows to infinity with the dimension for a small range of values of $p$ near 1. We prove that when $\mu$ is Lebesgue measure restricted to the unit ball and $p<2$, the $L^p$ operator norms of the maximal operator are unbounded in dimension, even when the action is restricted to radially decreasing functions. In spite of this, this maximal operator admits dimension-free $L^p$ bounds for every $p>2$, when restricted to radially decreasing functions. On the other hand, when $\mu$ is the Gaussian measure, the $L^p$ operator norms of the maximal operator grow to infinity with the dimension for any finite $p> 1$, even in the subspace of radially decreasing functions.
\end{abstract}

\subjclass[2000]{42B25}
\keywords{Maximal functions, radial measures, dimension free estimates}

\maketitle

\section{Introduction}

\bigskip

We denote by $B(x,R)$ the ball centred at $x$ with radius $R$ with respect to a given norm on $\real^n$. For any locally integrable function $g\in L^1_{\mbox{\tiny{loc}}}(\real^n)$, we can define the associated maximal function as
$$
Mg(x)=\sup_{R>0}\frac1{|B(x,R)|}\int_{B(x,R)}|g(y)|\,dy,
$$
where by $|A|$ we denote the Lebesgue measure of the set $A$. It is a well-known fact that $M$ satisfies the inequalities
\begin{equation}\label{acotacion.l.1.debil}
|\{y\in\real^n:Mf(y)>\lambda\}|\leq \frac{c_{1,n}}\lambda \|f\|_{L^1(\real^n)},
\end{equation}
\begin{equation}\label{acotacion.l.p}
\|Mf\|_{L^p(\real^n)}\leq C_{p,n}\|f\|_{L^p(\real^n)}, \mbox{ for } 1<p<\infty.
\end{equation}
Here the subscript $n$ indicates the (possible) dimension dependence of the constant.

\bigskip

With the aim of constructing a reasonable harmonic analysis over infinite-dimensional spaces, it has been a matter of interest to determine whether bounds for maximal functions in infinite dimension can be obtained as a limit of finite-dimensional bounds. This has led to a study of the behaviour of the $L^p$ bounds for maximal functions on $\real^n$ for large $n$.

\bigskip

It was E.M. Stein who first realised (see \cite{Stein1} and \cite{SteinStromberg}) that the maximal function associated with Euclidean balls admits an $L^p$ bound independent of the dimension for every $p>1$. After that J. Bourgain \cite{Bourgain1}, \cite{Bourgain2}, \cite{Bourgain3} and A. Carbery \cite{Carbery} showed that maximal functions associated with balls resulting from arbitrary norms also have $L^p$ bounds independent of the dimension for every $p>3/2$. This was further improved by D. M\"uller \cite{Muller}, who proved that for balls given by the $l^q$ norms in $\real^n$ with $1\leq q<\infty$, the associated maximal functions admit $L^p$ inequalities with constants that can be taken independent of the dimension for each $p>1$ (see also \cite{Bourgain3}).

\bigskip

As for the constants in the weak $L^1$ inequalities, E.M. Stein and J.O. Str\"omberg \cite{SteinStromberg} showed that if $c_{1,n}$ is the smallest constant satisfying (\ref{acotacion.l.1.debil}), then $c_{1,n}=\mathcal O(n\log n)$ as $n\rightarrow\infty$ when maximal functions associated to arbitrary convex bodies are considered. For the special case of the maximal function associated to Euclidean balls, it was also proved that $c_{1,n}=\mathcal O(n)$.

\bigskip

Using the idea of discretisation by M. de Guzm\'an \cite{Guzman}, T. Men\'arguez and F. Soria \cite{MenarguezSoria} produced a method to obtain lower bounds for $c_{1,n}$. With this method, J.M. Aldaz showed in \cite{Aldaz2} that $c_{1,n}$ tends to infinity with the dimension in the case of maximal functions associated to cubes. An explicit lower bound of this growth was given in \cite{Aubrun} by G. Aubrun. In a recent work \cite{NaorTao}, A. Naor and T. Tao extended the $n \log n$ result of \cite{SteinStromberg} to the context of Ahlfors-David $n$-regular metric measure spaces. They also showed that this bound for the constant is sharp by constructing a space for which the weak $L^1$ norm of the maximal function grows like $n \log n$.

\bigskip

When restricting the action to radial functions, maximal functions associated to Euclidean balls have weak $L^1$ bounds with a constant independent of the dimension as was shown by T. Men\'arguez and F. Soria \cite{MenarguezSoria2}.

\bigskip

We can also define maximal functions when the underlying measure is not that of Lebesgue. Given a Radon measure $\mu$ in $\real^n$, the associated maximal function is defined as
$$
M_\mu g(x):=\sup_{\begin{array}{c}\ \\[-6mm] \scriptstyle{R>0}\\[-2mm]\scriptstyle{\mu(B(x,R))>0}\end{array}} \frac1{\mu(B(x,R))}\int_{B(x,R)}|g(y)|\,d\mu(y).
$$
These maximal functions also satisfy strong type $L^p(\mu)$ estimates for $1<p<\infty$ and a weak type inequality for $p=1$, namely
$$
\mu\left(\{y\in\real^n : M_\mu f(y)>\lambda\}\right)\leq\frac{c}\lambda\,\|f\|_{L^1(\mu)},
$$
\begin{equation}\label{strong.bound}
\|M_\mu f\|_{L^p(\mu)}\leq C \|f\|_{L^p(\mu)}, \mbox{ for } 1<p<\infty,
\end{equation}
as well as an $L^\infty(\mu)$ bound with constant 1. Denoting by $c_{\mu,1}$ and $C_{\mu,p}$, respectively, the best constants in the previous inequalities, the problem of finding bounds independent of the dimension can be raised also for these constants. It will be convenient to consider instead of (\ref{strong.bound}) the weak $L^p(\mu)$ bounds
\begin{equation}\label{weak.bound}
\mu\left\{(y:M_\mu g(y)>\lambda)\right\}^{1/p} \leq \frac{c}\lambda \|g\|_{L^p(\mu)},\mbox{\quad}\lambda>0.
\end{equation}
The best constant in this inequality, $c_{\mu,p}$, satisfies
\begin{equation}\label{cota.constante}
C_{\mu,p} \geq c_{\mu,p} \geq 
\frac{\lambda\ \mu(\{y\in\real^n:M_\mu g(y)\geq \lambda\})^{1/p}}{\|g\|_{L^p(\mu)}},
\end{equation}
for all $\lambda>0$ and all nonzero $g\in L^p(\mu)$. We will bound $C_{\mu,p}$ from below by means of these two inequalities. Although $C_{\mu,p}$ might be significantly larger than $c_{\mu,p}$, they cannot have a very different behaviour with respect to the dimension. Indeed, if $c_{\mu,p}$ is bounded uniformly in the dimension, then by real interpolation $C_{\mu,q}$ is also bounded with respect to the dimension for all $q>p$.

\bigskip

From now on, we shall concentrate on maximal functions associated with radial measures and Euclidean balls. Let us recall some previous results. The proof of the above bounds by means of the Besicovitch covering lemma gives that $c_{\mu,p}$ and $C_{\mu,p}$ grow at most exponentially with the dimension. If $\mu$ has a radially increasing density, the method of proof in \cite{MenarguezSoria2} applies and we obtain a dimension-free weak $L^1(\mu)$ bound for radial functions (see \cite{AdrianInfante}). In the case of a finite and radially decreasing measure, it was proved by J.M. Aldaz \cite{Aldaz2} that the best constant in the weak type $L^1(\mu)$ bound grows exponentially to infinity. In \cite{C} the first author proved that the best constants $c_{\mu,p}$ in the weak $L^p(\mu)$ inequalities also grow exponentially to infinity with the dimension for values of $p$ in a small range above one ($1\leq p<1.0048$), even when restricting the action to radial functions. A slightly better result (for $1\leq p<1.0378$) was obtained independently in \cite{AldazPerezLazaro}. In both cases, it was seen that the method used cannot give unboundedness in dimension for significantly larger values of $p$.

\bigskip

In this paper we further study this problem and give complete answers in two relevant cases. We show that if $\nu_n$ is Lebesgue measure restricted to the unit ball of $\real^n$, the best constants $c_{\nu_n,p}$ in the weak $L^p(\nu_n)$ bounds for the associated maximal operators tend to infinity with the dimension for $1\leq p<2$. However, this maximal operator admits a dimension-free $L^2(\nu_n)$ bound of restricted weak type on radially decreasing functions, which implies a similar strong-type bound in $L^p(\nu_n)$ for $p>2$. We will also study the relation between this maximal function and the Hardy operator. On the other hand, we will prove that the maximal function associated to the Gaussian measure does not admit weak $L^p$ bounds independent of the dimension for any $p\in [1,\infty)$, not even when the action is restricted to radially decreasing functions.

\bigskip

Thus, different families of measures may have very different behaviour, and that is why general results like the ones in \cite{C} and \cite{AldazPerezLazaro} work only in a small range of values of $p$.

\bigskip

The statements of the results described above are given in the next section. Section 3 contains some notations and two lemmas that will be used in the proofs. Section 4 is devoted to the proofs of the results related to Lebesgue measure restricted to the unit ball. Finally, Section 5 deals with the proofs of the results related to the Gaussian measure.

\bigskip

\section{Statements of results}

\bigskip

Let $\nu_n$ be the measure on $\real^n$ whose density function is the characteristic function of the unit ball. We will prove that the best constants in the weak $L^p(\nu_n)$ inequalities grow exponentially to infinity with the dimension if $1\leq p<2$.

\bigskip

\begin{teo}\label{no.acotacion.hasta.l2}
If $1\leq p<2$, there exist constants $a_p>1$ and $c_p>0$ such that for all $n$
\[
c_{\nu_n,p}>c_p\,a_p^n.
\]
\end{teo}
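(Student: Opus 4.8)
The plan is to use the lower bound \eqref{cota.constante} with a cleverly chosen test function $g$ supported near the origin (so that it is radially decreasing) and to exploit the fact that, for $\nu_n$, a ball $B(x,R)$ centred at a point $x$ near the boundary of the unit ball has most of its $\nu_n$-mass concentrated in the ``cap'' $B(x,R)\cap B(0,1)$, whose relative Lebesgue measure can be made extremely small in high dimensions. Concretely, I would take $g=\chi_{B(0,r_n)}$ for a small radius $r_n$ (possibly $r_n$ a fixed small constant, or shrinking like $\delta/\sqrt n$ — the exact scaling is what one optimises). Then $\|g\|_{L^p(\nu_n)}^p=\nu_n(B(0,r_n))=r_n^n$ (up to the normalising factor $|B(0,1)|$, which cancels in ratios and is harmless). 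For a point $x$ with $|x|$ close to $1$, choosing $R=R(x)$ so that $B(x,R)\supset B(0,r_n)$ forces $R\gtrsim |x|$, and then
\[
M_{\nu_n}g(x)\;\ge\;\frac{\nu_n(B(0,r_n))}{\nu_n(B(x,R))}\;=\;\frac{|B(0,r_n)|}{|B(x,R)\cap B(0,1)|}.
\]
The whole point is that $|B(x,R)\cap B(0,1)|$, the volume of the intersection of a large ball with the unit ball, is much smaller than $|B(0,r_n)|$ would naively suggest, because the unit ball occupies an exponentially small fraction of $B(x,R)$ near its boundary.

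The key geometric estimate, which I expect to isolate as the heart of the argument, is a lower bound of the form $M_{\nu_n}g(x)\ge \lambda_n$ for all $x$ in a set $E_n\subset B(0,1)$ with $\nu_n(E_n)\ge b^n$ for some $b>0$, where $\lambda_n\ge A^n$ for some $A>1$ — i.e. both the value of the maximal function on a nonnegligible set and that set's measure decay only exponentially, not super-exponentially. The natural candidate for $E_n$ is an annular shell $\{x:1-\eta_n<|x|<1\}$ intersected with the unit ball, whose $\nu_n$-measure is $1-(1-\eta_n)^n\approx 1$ if $\eta_n$ is, say, a fixed constant or of order $1/n$; one must check that for every such $x$ one can indeed find a radius $R$ realising the claimed large value of $M_{\nu_n}g(x)$. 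Plugging into \eqref{cota.constante} with $\lambda=\lambda_n$ gives
\[
c_{\nu_n,p}\;\ge\;\frac{\lambda_n\,\nu_n(E_n)^{1/p}}{\|g\|_{L^p(\nu_n)}}\;\ge\;\frac{A^n\, b^{n/p}}{r_n^{n/p}},
\]
and the exponential blow-up will follow provided the exponents balance favourably, i.e. provided $A\,(b/r_n)^{1/p}>1$. Since the gain factor $A$ coming from the volume-ratio of caps behaves (roughly) like $c/r_n$ raised to a power that improves as $n\to\infty$, while the loss $r_n^{-1/p}$ is only a $1/p$-th power, one should be able to win exactly when $p<2$; the threshold $p=2$ is where the competing exponential rates cross, which is consistent with the companion positive result for $p>2$ announced in the paper.

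The main obstacle, then, is the sharp computation (or estimation) of the volume of a spherical cap, i.e. of $|B(x,R)\cap B(0,1)|$ when $|x|\approx 1$ and $R\approx 1$, with enough precision to pin down the exponential base $A$ and verify that $A^p b > r_n$ can be arranged for every $p<2$. I would handle this by integrating the cross-sectional $(n-1)$-volumes (an integral of $(1-t^2)^{(n-1)/2}$-type factors) and applying Laplace's method / Stirling's formula to extract the exponential rate; a cleaner route is to use that for a unit vector $u$ and small $\theta$, $\nu_n(\{y\in B(0,1): \langle y,u\rangle > 1-\theta\})$ decays like $(c\theta)^{n/2}$, which immediately produces the square-root in the exponent responsible for the $p<2$ restriction. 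Once this cap estimate is in hand with explicit constants, the rest is bookkeeping: choose $r_n$ (and $\eta_n$) to optimise, assemble the pieces via \eqref{cota.constante}, and read off $a_p>1$ and $c_p>0$. One should double-check the borderline normalisation (the factor $|B(0,1)|$ and whether to take $r_n$ fixed or $\sim 1/\sqrt n$) since that affects only the constants $c_p$, not the exponential conclusion, but it is worth getting right so the statement ``$c_{\nu_n,p}>c_p\,a_p^n$'' holds for all $n$ rather than just large $n$.
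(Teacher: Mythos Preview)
Your proposal contains a genuine gap, and the regime you have chosen is essentially the opposite of the one that works. First, an elementary point: since $g=\chi_{B(0,r_n)}$, one has $M_{\nu_n}g\le 1$ everywhere, so the claim ``$\lambda_n\ge A^n$ for some $A>1$'' is impossible. The exponential gain in \eqref{cota.constante} must come entirely from the factor $\nu_n(E_n)^{1/p}/\|g\|_{L^p(\nu_n)}=(|x|/r)^{n/p}$, and the question is whether the loss from $M_{\nu_n}\chi_r(x)\le 1$ can be kept mild enough. With $|x|=1$ one needs
\[
\frac{|B(x,R)\cap B_r|}{|B(x,R)\cap B_1|}\cdot r^{-n/p}
\]
to grow exponentially. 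Writing $\phi_r(R)=(\ell/L)^2$ for the squared ratio of the radii of the two intersection circles (as in the paper), the first factor behaves like $\phi_r(R)^{n/2}$, so one needs $\phi_r(R)^{1/2}r^{-1/p}>1$, i.e.\ $p<2\log r/\log\phi_r(R)$. A short computation (or the inequality $\ell^2\le rL^2$ used in the proof of Proposition~\ref{cociente.bolas}) shows that for $r$ small and \emph{any} admissible $R$, one has $\phi_r(R)\lesssim r^2$; at the optimal $R\approx 1$ one gets $\phi_r(R)\approx \tfrac{4}{3}r^2$, hence $2\log r/\log\phi_r(R)\to 1$ as $r\to 0$. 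Thus your configuration with a small radius $r_n$ (and $R\gtrsim|x|$) proves exponential blow-up only for $p$ in a neighbourhood of $1$, not for all $p<2$. Taking $B(x,R)\supset B(0,r_n)$ is even worse: then $R\ge 1+r_n$, the denominator $|B(x,R)\cap B_1|$ is at least half of $|B_1|$, and the ratio is $\sim r_n^{\,n}$, giving nothing for $p>1$.

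The paper's approach goes in the opposite direction: take $r$ \emph{close to $1$} and $R$ \emph{small}, specifically $R=(1-r^2)^{1/4}\to 0$. Both $B(x,R)\cap B_1$ and $B(x,R)\cap B_r$ are thin lenses obtained by intersecting the small ball $B(x,R)$ with $B_1$ and with $B_r$; their volumes are compared via the cap estimate \eqref{medida.casquete}. With this scaling one checks that $\phi_r(R)=1-\tfrac{t}{2}+o(t)$ as $t=1-r^2\to 0$, so that $2\log r/\log\phi_r(R)\to 2$. Hence for every $p<2$ one may fix $r=r(p)$ close enough to $1$ to make $\phi_r(R)^{1/2}r^{-1/p}>1$, yielding the desired $a_p>1$. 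Your cap heuristic ``$\nu_n(\{\langle y,u\rangle>1-\theta\})\sim(c\theta)^{n/2}$'' is morally related, but it must be applied to two nearby caps of the small ball $B(x,R)$ (cut by $\partial B_1$ and by $\partial B_r$ with $r\approx 1$), not to a single cap of $B_1$ cut by a large ball.
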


\bigskip

To obtain this result, we will look at the action of the maximal operators on characteristic functions of balls centred at the origin. We will denote by $B_r$ the ball with radius $r$ centred at the origin, and let $\chi_r=\chi_{B_r}$ be its characteristic function. However, for $p \geq 2$ these functions cannot be used to find a counterexample; indeed, we will show that the action of $M_{\nu_n}$ on them has weak $L^{p}(\nu_n)$ bounds independent of the dimension:

\bigskip

\begin{prop}\label{lp.caracteristicas.bolas}
Let $p\geq 2$ and $r>0$. Then
\[
\|M_{\nu_n}\chi_r\|_{L^{p,\infty}(\nu_n)}^\ast\leq 2^{2/p}\|\chi_r\|_{L^p(\nu_n)}.
\]
\end{prop}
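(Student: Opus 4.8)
The plan is to compute $M_{\nu_n}\chi_r(x)$ explicitly, or at least bound it sharply from above as a function of $|x|$, and then estimate the resulting distribution function. Write $\rho = |x|$; by rotational symmetry $M_{\nu_n}\chi_r(x)$ depends only on $\rho$. For a ball $B(x,R)$, we must control $\nu_n(B(x,R)\cap B_r)/\nu_n(B(x,R))$, i.e. the fraction of $B(x,R)\cap B_1$ that lies inside $B_r$. When $\rho \le r$ the supremum is clearly $1$ (take $R$ small), so $M_{\nu_n}\chi_r \equiv 1$ on $B_r$ and the interesting regime is $r < \rho < 1$. The key geometric observation I would use is that to maximize the fraction of mass of $B(x,R)\cap B_1$ falling in $B_r$, it is never helpful to take $R$ so small that the ball misses $B_r$, nor so large that it sticks far outside $B_1$; a short convexity/monotonicity argument (or an explicit optimization in $R$) should show the optimal $R$ is comparable to $\rho - r$ or to $1-\rho$, and in any case one gets a clean bound of the shape $M_{\nu_n}\chi_r(x) \le C\bigl(\tfrac{r}{\rho}\bigr)^{?}$ — more precisely I expect something like $M_{\nu_n}\chi_r(x) \lesssim (r/\rho)^{n}$ fails to be quite right, and the truth is governed by how much of a ball of radius $\sim(\rho-r)$ centred on the sphere $|y|=\rho$ can be pushed inward past radius $r$, which is a fixed positive fraction independent of $n$ once $R \sim \rho - r$. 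So I anticipate a bound of the form $M_{\nu_n}\chi_r(x) \le c\,\bigl(\frac{r}{\rho}\bigr)^{n}$ only in a restricted range and, globally, a weaker but dimension-free estimate suffices.

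Concretely, here is the cleaner route I would actually pursue. Since $\|\chi_r\|_{L^p(\nu_n)}^p = \nu_n(B_r)/\nu_n(B_1) = r^n$ (normalizing $\nu_n$ to be a probability measure on the ball, or keeping the unnormalized version — either way the ratio is $r^n$), the claim $\|M_{\nu_n}\chi_r\|_{L^{p,\infty}(\nu_n)}^\ast \le 2^{2/p}\|\chi_r\|_{L^p(\nu_n)}$ amounts to showing, for every $\lambda \in (0,1]$,
\[
\lambda^p\,\nu_n\{x : M_{\nu_n}\chi_r(x) > \lambda\} \le 2^{2}\, r^n \nu_n(B_1).
\]
I would split at $\lambda$ close to $1$: on $\{M_{\nu_n}\chi_r > \lambda\}$ with $\lambda$ near $1$, the point $x$ must be very close to $B_r$ (a ball centred at $x$ must be mostly inside $B_r$), so this super-level set is contained in $B_{r/\lambda^{1/n}}$ or similar, giving measure $\lesssim r^n$ directly. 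For smaller $\lambda$ one uses the geometric estimate for $\rho > r$: the fraction of $B(x,R)$ inside $B_r$, maximized over $R$, is at most (a constant times) $(r/\rho)^n$ when $\rho$ is bounded away from $r$ — this is where I'd do the real work, comparing the cap volumes. Then $\{M_{\nu_n}\chi_r > \lambda\} \subset \{ c(r/\rho)^n > \lambda\} = \{\rho < c^{1/n} r \lambda^{-1/n}\}$, whose $\nu_n$-measure is $\le c\, r^n \lambda^{-1} \nu_n(B_1)$, and multiplying by $\lambda^p$ with $p \ge 2 > 1$ kills the $\lambda^{-1}$ for $\lambda \le 1$ (and for $\lambda > 1$ the level set is empty since $M_{\nu_n}\chi_r \le 1$). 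Tracking the constant $c$ carefully through the cap comparison is what should produce the clean $2^{2/p}$.

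The main obstacle is the geometric core: proving that for $r < \rho < 1$,
\[
\sup_{R>0}\frac{|B(x,R)\cap B_r|}{|B(x,R)\cap B_1|} \le c\Bigl(\frac{r}{\rho}\Bigr)^{n}
\]
with $c$ a good absolute constant (ideally one can even take the sharp $c$). The difficulty is that the optimizing $R$ depends on the configuration, and the sets $B(x,R)\cap B_r$ and $B(x,R)\cap B_1\setminus B_r$ are lens-shaped regions whose volumes do not have simple closed forms; I would handle this by a scaling/monotonicity reduction — rescale so that $B_r$ becomes the unit ball and show the worst case is when $B(x,R)$ is tangent internally to $\partial B_r$ or passes through the origin — plus the elementary fact that among balls containing a fixed portion of $B_r$, enlarging $R$ only increases the "outside $B_r$" part faster than the "inside" part once we are past the origin. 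A fallback, if the constant $2^{2/p}$ proves delicate, is to first establish the inequality with \emph{some} dimension-free constant via this argument and then optimize, but I expect the factor $2^2$ comes exactly from the two-region split (level sets near $\lambda=1$ versus away from it) described above. Finally, everything should be verified to hold for an arbitrary norm's balls only if claimed — here only Euclidean balls are needed, which keeps the cap computations standard.
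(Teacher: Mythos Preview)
Your proposal contains a genuine gap at the central geometric step. You set as the ``main obstacle'' the inequality
\[
\sup_{R>0}\frac{|B(x,R)\cap B_r|}{|B(x,R)\cap B_1|}\;\le\;c\Bigl(\frac{r}{|x|}\Bigr)^{n}
\]
with a dimension-free constant $c$. This inequality is \emph{false}. If it held, your own level-set argument would give $\nu_n\{M_{\nu_n}\chi_r>\lambda\}\le c\,r^n\lambda^{-1}|B_1|$, hence $\lambda^p\,\nu_n\{M_{\nu_n}\chi_r>\lambda\}\le c\,r^n\lambda^{p-1}|B_1|$, which is bounded for every $p\ge 1$ and $\lambda\le 1$. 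That would yield a dimension-free weak $L^p(\nu_n)$ bound on the $\chi_r$ for all $p\ge 1$, contradicting Theorem~\ref{no.acotacion.hasta.l2}. You even flag this yourself: you write ``$p\ge 2>1$ kills the $\lambda^{-1}$'', but the condition you actually use is only $p\ge 1$, so the argument proves too much.

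The correct exponent is $n/2$, not $n$: the paper establishes (Proposition~\ref{cociente.bolas}) that
\[
M_{\nu_n}\chi_r(x)\;\le\;2\Bigl(\frac{r}{|x|}\Bigr)^{n/2},
\]
and this is essentially sharp in view of the computation in the proof of Theorem~\ref{no.acotacion.hasta.l2}. With the exponent $n/2$ your distribution-function scheme works perfectly: the super-level set $\{M_{\nu_n}\chi_r>\lambda\}$ sits inside $B_{r(2/\lambda)^{2/n}}$, whose $\nu_n$-measure is at most $4r^n\lambda^{-2}|B_1|$, and then $\lambda^p$ times this is $4r^n\lambda^{p-2}|B_1|\le 4r^n|B_1|$ precisely when $p\ge 2$. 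This is exactly the paper's proof, and it also explains why $p=2$ is the threshold. Your intuition that the optimal fraction is ``a fixed positive fraction independent of $n$'' for $R\sim \rho-r$ is therefore also off: the fraction decays like $(r/\rho)^{n/2}$, and pinning down that half-power requires the careful cap-volume comparison carried out in Proposition~\ref{cociente.bolas}, which is the real work here.
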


\bigskip

For Lorentz spaces $L^{p,q}$ and their norms and quasinorms, see Section \ref{section.notations}.

\bigskip

As a consequence of this proposition we obtain a dimension-free restricted weak-type $L^p(\nu_n)$ inequality for radial decreasing functions.

\bigskip

\begin{teo}\label{debil.restringida.lp}
Let $g$ be a radial, decreasing function in $\real^n$. Then for $p\geq2$
\[
\|M_{\nu_n}g\|_{L^{p,\infty}(\nu_n)}^\ast\leq \frac p{p-1} 2^{2/p} \|g\|_{L^{p,1}(\nu_n)}^\ast.
\]
\end{teo}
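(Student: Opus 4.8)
The plan is to derive Theorem~\ref{debil.restringida.lp} from Proposition~\ref{lp.caracteristicas.bolas} by the standard ``layer-cake'' device that expresses a radial decreasing function as a superposition of characteristic functions of balls centred at the origin, combined with the sublinearity of $M_{\nu_n}$ and the triangle inequality in the Lorentz space $L^{p,\infty}(\nu_n)$. First I would write, for a nonnegative radial decreasing $g$ on $\real^n$,
\[
g(x)=\int_0^\infty \chi_{\{g>t\}}(x)\,dt=\int_0^\infty \chi_{r(t)}(x)\,dt,
\]
where, since $g$ is radial and decreasing, each superlevel set $\{g>t\}$ is a ball $B_{r(t)}$ centred at the origin (up to a null set). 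Then by sublinearity and monotonicity of the maximal operator, $M_{\nu_n}g(x)\le \int_0^\infty M_{\nu_n}\chi_{r(t)}(x)\,dt$ pointwise.

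Next I would apply the quasi-triangle inequality for the normalised Lorentz ``norm'' $\|\cdot\|^\ast_{L^{p,\infty}(\nu_n)}$. The key point here is that for $p>1$ the functional $\|\cdot\|^\ast_{L^{p,\infty}}$ is an honest norm (equivalent to the quasinorm, with the constant $p/(p-1)$ coming from the passage between $\|f\|^\ast_{L^{p,\infty}}=\sup_t t^{1/p}f^{**}(t)$ and the quasinorm $\sup_t t^{1/p}f^*(t)$), so it obeys the triangle inequality and one can pull it inside the integral in $t$:
\[
\|M_{\nu_n}g\|^\ast_{L^{p,\infty}(\nu_n)}\le \int_0^\infty \|M_{\nu_n}\chi_{r(t)}\|^\ast_{L^{p,\infty}(\nu_n)}\,dt
\le 2^{2/p}\int_0^\infty \|\chi_{r(t)}\|_{L^p(\nu_n)}\,dt,
\]
the last step being exactly Proposition~\ref{lp.caracteristicas.bolas}. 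It remains to recognise that $\int_0^\infty \|\chi_{r(t)}\|_{L^p(\nu_n)}\,dt=\int_0^\infty \nu_n(\{g>t\})^{1/p}\,dt$ is, up to the normalising constant, the Lorentz norm $\|g\|^\ast_{L^{p,1}(\nu_n)}$; tracking the normalisation of $L^{p,1}$ (where $\|g\|^\ast_{L^{p,1}}=\frac1p\int_0^\infty t^{1/p}g^*(t)\,\frac{dt}{t}$ versus the ``bare'' integral $\int_0^\infty \nu_n(\{g>t\})^{1/p}\,dt$) produces precisely the factor $p/(p-1)$ together with the clean identity, via Fubini/Minkowski, $\int_0^\infty \lambda_g(t)^{1/p}\,dt = \|g\|_{L^{p,1}}$ in the un-normalised convention, and the stated normalised form after inserting the $p/(p-1)$ for the $L^{p,\infty}$ side. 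Assembling these gives the claimed bound
\[
\|M_{\nu_n}g\|^\ast_{L^{p,\infty}(\nu_n)}\le \frac{p}{p-1}\,2^{2/p}\,\|g\|^\ast_{L^{p,1}(\nu_n)}.
\]

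I expect the only real subtlety — and hence the main thing to get right rather than a genuine obstacle — is the bookkeeping of the various Lorentz-space normalisations: which functional is a norm, which is merely a quasinorm, and exactly where the constant $p/(p-1)$ enters (it is the norm/quasinorm equivalence constant for $L^{p,\infty}$, not an artifact of $L^{p,1}$). One must also make sure the triangle inequality is applied to a genuine norm, so I would state explicitly that for $p>1$, $\|\cdot\|^\ast_{L^{p,\infty}(\nu_n)}$ is subadditive, and that the continuous Minkowski inequality justifies moving $\|\cdot\|^\ast_{L^{p,\infty}}$ inside $\int_0^\infty(\cdot)\,dt$. A minor technical point is reducing to nonnegative $g$ (immediate, since $M_{\nu_n}g=M_{\nu_n}|g|$) and handling the case $g$ not strictly decreasing by noting superlevel sets are still balls up to $\nu_n$-null sets. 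No dimension-dependent quantity appears anywhere except through Proposition~\ref{lp.caracteristicas.bolas}, whose constant $2^{2/p}$ is already dimension-free, so the final bound is automatically dimension-free.
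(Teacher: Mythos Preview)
Your approach is essentially the same as the paper's: express the radial decreasing function as a superposition of characteristic functions of centred balls, apply sublinearity of $M_{\nu_n}$, use the triangle inequality for the genuine Lorentz norm on $L^{p,\infty}$, invoke Proposition~\ref{lp.caracteristicas.bolas} on each layer, and recognise the resulting sum/integral as $\|g\|_{L^{p,1}(\nu_n)}^\ast$. The paper carries this out with simple functions $g=\sum_i c_i\chi_{B_i}$ and a density argument rather than the continuous layer-cake integral, but that is a cosmetic difference.

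There is, however, a concrete notational error in your write-up. In this paper (see Section~\ref{section.notations}) the starred functional is the \emph{quasinorm}
\[
\|f\|_{L^{p,\infty}(\nu_n)}^\ast=\sup_{s>0}s^{1/p}f^\ast(s),
\]
while the unstarred $\|\cdot\|_{L^{p,\infty}(\nu_n)}$ denotes the genuine norm built from $f^{\ast\ast}$. You have these reversed: you assert that $\|\cdot\|^\ast_{L^{p,\infty}}$ is ``an honest norm'' and write $\|f\|^\ast_{L^{p,\infty}}=\sup_t t^{1/p}f^{\ast\ast}(t)$, which contradicts the paper's convention and hence the very statement you are proving. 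The correct chain, in the paper's notation, is
\[
\|M_{\nu_n}g\|_{L^{p,\infty}}^\ast\le \|M_{\nu_n}g\|_{L^{p,\infty}}\le \int_0^\infty \|M_{\nu_n}\chi_{r(t)}\|_{L^{p,\infty}}\,dt\le \frac{p}{p-1}\int_0^\infty \|M_{\nu_n}\chi_{r(t)}\|_{L^{p,\infty}}^\ast\,dt,
\]
with the middle step using subadditivity of the unstarred norm. After this correction (and noting that $\int_0^\infty \nu_n(\{g>t\})^{1/p}\,dt=\|g\|_{L^{p,1}(\nu_n)}^\ast$ exactly, with no extra factor), the argument goes through and matches the paper's proof.
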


\bigskip

One can check that the proof of the Marcinkiewicz theorem for Lorentz spaces (see \cite{SteinWeiss}, page 197) is also valid when restricting the action of the operator to radially decreasing functions. This allows us to interpolate between the case $p=2$ of Theorem \ref{debil.restringida.lp} and the $L^\infty(\nu_n)$ inequality to obtain

\bigskip

\begin{teo}\label{lp.con.marcinkiewicz}Let $g$ be a radially decreasing function. One has for $p>2$
\begin{eqnarray*}
\|M_{\nu_n}g\|_{L^{p,\infty}(\nu_n)}^\ast &\leq& 2^{1/p}\,\frac{5p-2}{p-2}\,\|g\|_{L^p(\nu_n)},\\
\|M_{\nu_n}g\|_{L^p(\nu_n)}&\leq&2^{1/p}\,\frac{5p-2}{p-2}\,\|g\|_{L^p(\nu_n)}.
\end{eqnarray*}
\end{teo}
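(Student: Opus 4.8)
The plan is to deduce Theorem~\ref{lp.con.marcinkiewicz} from two endpoint estimates by the Lorentz-space version of the Marcinkiewicz interpolation theorem, exactly as announced just before the statement. The first endpoint is the case $p=2$ of Theorem~\ref{debil.restringida.lp}, which for radially decreasing $g$ reads
\[
\|M_{\nu_n}g\|_{L^{2,\infty}(\nu_n)}^\ast\le 4\,\|g\|_{L^{2,1}(\nu_n)}^\ast ,
\]
since $\tfrac{p}{p-1}2^{2/p}=4$ at $p=2$. The second endpoint is the trivial bound $\|M_{\nu_n}g\|_{L^\infty(\nu_n)}\le\|g\|_{L^\infty(\nu_n)}$ recorded in the Introduction, valid with constant $1$ for every $g$. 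The operator $M_{\nu_n}$ is positive and sublinear, so both $M_{\nu_n}(g+h)\le M_{\nu_n}g+M_{\nu_n}h$ and the monotonicity $0\le g\le h\Rightarrow M_{\nu_n}g\le M_{\nu_n}h$ are available.

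Next I would run the standard proof of the Marcinkiewicz theorem in the Lorentz scale (\cite{SteinWeiss}, p.~197) with the pair of exponents $(p_0,q_0)=(2,2)$ and $(p_1,q_1)=(\infty,\infty)$. For fixed $p\in(2,\infty)$ put $\theta=1-2/p\in(0,1)$, so that $1/p=(1-\theta)/2$. The only point requiring care is that the argument must stay inside the class of radially decreasing functions, since the two endpoint bounds are assumed only there. But the proof splits $g$ at a height $s>0$ into $g^s=g\,\chi_{\{g>s\}}$ and $g_s=g-g^s=\min(g,s)$; the radially decreasing functions form a convex cone which is moreover stable under these truncations, because the superlevel sets of a radially decreasing function are balls centred at the origin, so $g^s$ and $g_s$ are again radially decreasing. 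Hence the endpoint estimates may be applied to them at every step. Summing the two resulting contributions — one over $\{g>s\}$ controlled by the $L^\infty$ bound, one over $\{g\le s\}$ controlled by the $L^{2,1}\to L^{2,\infty}$ bound — and optimising in $s$ produces $\|M_{\nu_n}g\|_{L^{p,\infty}(\nu_n)}^\ast\le A_p\|g\|_{L^p(\nu_n)}$.

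To pass to the strong-type inequality I would interpolate once more, now between some $p_0'\in(2,p)$ — where the weak $(p_0',p_0')$ bound just obtained holds — and $\infty$; a standard Marcinkiewicz argument then upgrades $L^{p}\to L^{p,\infty}$ to $L^{p}\to L^{p}$ of the same order. Equivalently, taking the Lorentz parameter $r=p$ in the first interpolation already yields the $L^{p}=L^{p,p}$ bound directly. In either case the two displayed inequalities come out with the same constant.

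The conceptual content is thus light; the genuine work is bookkeeping the explicit constant $2^{1/p}\,\tfrac{5p-2}{p-2}$. The factor $\tfrac{p}{p-2}$ (note $\theta=(p-2)/p$, $1-\theta=2/p$) arises from the two integrals $\int_0^\infty ds$ summed in the Marcinkiewicz estimate, one of which blows up as $p\downarrow 2$; the power $2^{1/p}$ reflects the endpoint constant $4$ entering with exponent $1-\theta=2/p$ together with the choice of the splitting height $s$. Pushing these elementary estimates through so as to obtain precisely $\tfrac{5p-2}{p-2}$, rather than a cruder rational function of $p$, is the only delicate part. There is no obstacle of principle, and the restriction to $p>2$ is forced: Theorem~\ref{no.acotacion.hasta.l2} precludes any dimension-free estimate of this kind for $p<2$.
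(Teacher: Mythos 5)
Your proposal follows exactly the paper's route: the paper proves this theorem precisely by applying the Marcinkiewicz interpolation theorem for Lorentz spaces (\cite{SteinWeiss}, p.~197) between the $p=2$ case of Theorem~\ref{debil.restringida.lp} and the trivial $L^\infty(\nu_n)$ bound, after observing that the argument remains valid within the cone of radially decreasing functions (your remark that the truncations $\min(g,s)$ and $g\chi_{\{g>s\}}$ stay in this cone is the relevant check). The paper gives no more detail on the constant bookkeeping than you do, so your proposal is a faithful reconstruction of its proof.
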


\bigskip

However, as we will show, there is a more direct way to obtain weak-type $L^p$ bounds. For this, we will control $M_{\nu_n}$ by a modified Hardy operator. Defining the Hardy operator for a locally integrable function $g$ in $\real^n$ as
$$
\mathcal Ag(x)=\frac1{|B_{|x|}|}\int_{B_{|x|}}|g(y)|\,dy,
$$
we have the following estimate.

\bigskip

\begin{prop}\label{operador.hardy}
Given a radially decreasing function $g$, one has for $p>2$
$$
M_{\nu_n}g(x)\leq \frac{p+2}{p-2}\left(\mathcal A g^p(x)\right)^{1/p},
$$
for each $x\in\real^n$.
\end{prop}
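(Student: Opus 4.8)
The plan is to estimate a single average $\frac{1}{\nu_n(B(x,R))}\int_{B(x,R)}|g|\,d\nu_n$ appearing in the definition of $M_{\nu_n}g(x)$, and show it is bounded by $\frac{p+2}{p-2}(\mathcal A g^p(x))^{1/p}$ uniformly in $R>0$. Write $\rho=|x|$ and note that, since $g$ is radially decreasing, we may work with the radial profile of $g$, and that $B(x,R)\cap B_1$ is the relevant set because $\nu_n$ lives on the unit ball. First I would split according to the size of $R$ relative to $\rho$: the ball $B(x,R)$ either is contained in (a dilate of) $B_\rho$ up to a controlled error, or it protrudes well beyond $B_\rho$. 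In the regime where $R$ is not too large compared with $\rho$, the set $B(x,R)$ sits inside $B_{\rho+R}$ and one can compare $\frac{1}{\nu_n(B(x,R))}\int_{B(x,R)}|g|\,d\nu_n$ with the Hardy average $\frac{1}{|B_\rho|}\int_{B_\rho}|g|\,dy$, using that $g$ is decreasing so that its restriction to a thin annular shell beyond $B_\rho$ does not exceed its average over $B_\rho$; this will cost only a dimension-free constant.

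The crux is the opposite regime, where $R\geq c\rho$ so that $B(x,R)$ engulfs the origin and a definite fraction of the unit ball. Here I would use the elementary geometric fact that if $B(x,R)$ meets $B_1$ substantially, then it contains $B_{s}$ for some $s$ comparable to $\min(R,1)$ (more precisely, since $|x|=\rho$, the ball $B(x,R)$ contains $B_{R-\rho}$ when $R>\rho$), hence $\nu_n(B(x,R))\geq \nu_n(B_{R-\rho})=\omega_n(R-\rho)^n$ if $R-\rho\le 1$. Then I bound $\int_{B(x,R)}|g|\,d\nu_n\le \int_{B_{\rho+R}\cap B_1}|g|\,dy$ and apply Hölder's inequality with exponents $p$ and $p'$ to pull out $(\int |g|^p)^{1/p}$ against a volume factor $|B_{\rho+R}\cap B_1|^{1/p'}\le (\omega_n(\rho+R)^n)^{1/p'}$. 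The ratio of these volume factors to $\nu_n(B(x,R))$ produces a factor like $\left(\frac{\rho+R}{R-\rho}\right)^{n/p'}$, and crucially $(\mathcal A g^p(x))^{1/p}=\left(\frac{1}{\omega_n\rho^n}\int_{B_\rho}|g|^p\right)^{1/p}$ carries a compensating $\rho^{-n/p}$; combining, one gets a bound $\left(\frac{\rho+R}{R-\rho}\right)^{n/p'}\left(\frac{\rho}{\rho+R}\right)^{n/p}\cdot(\cdots)$ or similar, and the whole thing must be summed/supremized over the dyadic range of $R$. Here the condition $p>2$, equivalently $p'<2<p$ so that $n/p'<n/p$ fails — wait, rather $n/p'$ vs $n/p$: the point is that the geometric series in $n$ converges precisely because the relevant exponent comparison forces a contractive ratio, and tracking constants gives $\frac{p+2}{p-2}$.

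The main obstacle I anticipate is making the geometry uniform in $R$ and summing the contributions without dimensional loss: one must carefully choose the dyadic decomposition of the scales $R$ so that on each scale the volume-ratio estimate is contractive with a rate bounded away from $1$ independently of $n$, and then the geometric series over scales converges to something like $\frac{1}{1-2^{-(p-2)/p}}$, which one bounds by $\frac{p+2}{p-2}$ after elementary manipulation (using $1-2^{-t}\ge t/2$ type inequalities for $t\in(0,1)$). A secondary technical point is handling the boundary case $R-\rho>1$, where $\nu_n(B(x,R))=\nu_n(B_1)=\omega_n$ is maximal and the estimate is easiest, and gluing it to the other ranges. Once the per-scale contractive estimate is in place, the rest is a routine summation, so I would present the geometric lemma carefully and then be terse about the series.
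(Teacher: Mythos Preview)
Your proposal has a genuine gap: the H\"older-and-volume argument you sketch produces constants that grow with the dimension, and no amount of dyadic bookkeeping repairs this. Concretely, in your ``large $R$'' regime (say $\rho<R$ and $\rho+R\le 1$, so $\nu_n(B(x,R))=|B(x,R)|$), H\"older gives
\[
\frac{1}{\nu_n(B(x,R))}\int_{B(x,R)}|g|\,d\nu_n \le |B(x,R)|^{-1/p}\Bigl(\int_{B_{\rho+R}}|g|^p\Bigr)^{1/p},
\]
and since $g$ is radially decreasing, $\int_{B_{\rho+R}}|g|^p\le (|B_{\rho+R}|/|B_\rho|)\int_{B_\rho}|g|^p$. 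Combining, the constant in front of $(\mathcal Ag^p(x))^{1/p}$ is $\bigl(\tfrac{\rho+R}{R}\bigr)^{n/p}$, which is at least $2^{n/p}$ for $R$ near $\rho$. Replacing $|B(x,R)|$ by the cruder lower bound $|B_{R-\rho}|$ only makes this worse. There is also no ``geometric series over scales'' here: $M_{\nu_n}$ is a \emph{supremum} over $R$, so a single bad value of $R$ already spoils the estimate; nothing is being summed. Your ``small $R$'' regime is equally problematic, since the closest point of $B(x,R)$ to the origin lies at distance $\rho-R$, and $g(\rho-R)$ is not controlled by $(\mathcal Ag^p(x))^{1/p}$ with a dimension-free constant.

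What is missing is precisely the dimension-free geometric input of Proposition~\ref{cociente.bolas}: for every $R>0$ and $0<r\le 1$,
\[
\frac{|B(x,R)\cap B_r|}{|B(x,R)\cap B_1|}\le 2\Bigl(\frac{r}{|x|}\Bigr)^{n/2}.
\]
The exponent $n/2$ (rather than the $n/p$ or $n/p'$ that H\"older produces) is the reason $p=2$ is the threshold. The paper's proof normalises so that $\mathcal Ag^p(x)=1$, writes the average over $B(x,R)\cap B_1$ by the layer-cake formula as $\int_0^\infty \frac{|B(x,R)\cap B_{r(t)}|}{|B(x,R)\cap B_1|}\,dt$ (with $B_{r(t)}=\{g>t\}$), bounds the part $0\le t\le 1$ trivially by $1$, and on $t>1$ applies Proposition~\ref{cociente.bolas} together with Chebyshev to get $2\int_1^\infty t^{-p/2}\,dt=4/(p-2)$. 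This is where the constant $\tfrac{p+2}{p-2}=1+\tfrac{4}{p-2}$ comes from; no scale decomposition in $R$ is involved.
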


\bigskip

This is useful, because we can bound the operator $g\mapsto(\mathcal Ag^{p})^{1/p}$ as follows.

\bigskip

\begin{prop}\label{acotacion.operador.hardy} If $g$ is a radially decreasing function in $\real^n$ and $p\geq 1$, then
\[
\|(\mathcal A g^p)^{1/p}\|_{L^{p,\infty}(\nu_n)}^{\ast}\leq \|g\|_{L^p(\nu_n)}.
\]
\end{prop}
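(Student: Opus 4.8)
The plan is to reduce the claimed weak-type bound to a one-dimensional statement about the classical Hardy operator on the half-line, exploiting that $g$ is radially decreasing and that $\nu_n$ is just Lebesgue measure on the unit ball. First I would write $g(x) = h(|x|)$ with $h$ decreasing on $[0,1]$, so that $g^p(x) = h^p(|x|)$, and compute in polar coordinates
\[
\mathcal A g^p(x) = \frac{1}{|B_{|x|}|}\int_{B_{|x|}} h^p(|y|)\,dy
= \frac{n}{|x|^n}\int_0^{|x|} h^p(s)\, s^{n-1}\, ds.
\]
Since $g$ is radially decreasing, $(\mathcal A g^p)^{1/p}$ is again a radial function of $|x|$, and it is decreasing in $|x|$ (the average of a decreasing function over an expanding ball decreases). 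Consequently its distribution function with respect to $\nu_n$ is controlled by a single superlevel radius: $\nu_n(\{(\mathcal A g^p)^{1/p} > \lambda\})$ equals $|B_\rho|$ for the largest $\rho \le 1$ with $\mathcal A g^p$ at radius $\rho$ exceeding $\lambda^p$, and similarly $\|g\|_{L^p(\nu_n)}^p = \int_{B_1} h^p(|y|)\,dy = n\int_0^1 h^p(s) s^{n-1}\,ds$.

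Next I would estimate the superlevel set directly. Fix $\lambda > 0$ and let $\rho = \rho(\lambda) \in (0,1]$ be such that $(\mathcal A g^p(x))^{1/p} > \lambda$ for $|x| < \rho$ (and we may assume such $\rho$ exists, otherwise the level set is null). Then for every $x$ with $|x| = \rho$ we have, by the computation above,
\[
\lambda^p \le \mathcal A g^p(x) = \frac{n}{\rho^n}\int_0^{\rho} h^p(s)\, s^{n-1}\, ds
\le \frac{n}{\rho^n}\int_0^{1} h^p(s)\, s^{n-1}\, ds
= \frac{\|g\|_{L^p(\nu_n)}^p}{\rho^n \,|B_1|}\cdot|B_1| .
\]
Writing $\omega_n = |B_1|$, this gives $\lambda^p \,\omega_n\, \rho^n \le \|g\|_{L^p(\nu_n)}^p$, i.e. $\omega_n \rho^n \le \lambda^{-p}\|g\|_{L^p(\nu_n)}^p$. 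But $\omega_n \rho^n = |B_\rho| = \nu_n(B_\rho)$, and $\nu_n(\{(\mathcal A g^p)^{1/p} > \lambda\}) \le \nu_n(\overline{B_\rho}) = \omega_n\rho^n$, so
\[
\lambda^p\, \nu_n\bigl(\{x : (\mathcal A g^p(x))^{1/p} > \lambda\}\bigr) \le \|g\|_{L^p(\nu_n)}^p ,
\]
which is exactly $\|(\mathcal A g^p)^{1/p}\|_{L^{p,\infty}(\nu_n)} \le \|g\|_{L^p(\nu_n)}$ (and since the level set is a ball, hence of finite measure, the quasinorm $\|\cdot\|^*$ in Lorentz notation agrees with this up to the normalization used in Section \ref{section.notations}, so the starred inequality follows as well).

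I do not expect a serious obstacle here; the only points requiring a little care are the monotonicity claim for $(\mathcal A g^p)^{1/p}$ in $|x|$ — which is the standard fact that averaging a nonincreasing radial function over balls about the origin produces a nonincreasing function of the radius — and matching the normalization of $\|\cdot\|_{L^{p,\infty}}^*$ from Section \ref{section.notations} (whether it is the distributional quasinorm or the one defined via the decreasing rearrangement with the $p/(p-1)$-type factor), so that the constant $1$ is correct in that convention. If the starred norm carries an extra constant, the estimate above still yields it since the superlevel set is literally a ball and $(\mathcal A g^p)^{1/p}$ is its own decreasing rearrangement. The heart of the argument is the single inequality $\lambda^p\,\rho^n\omega_n \le \|g\|_{L^p(\nu_n)}^p$, obtained by bounding the truncated integral $\int_0^\rho$ by the full integral $\int_0^1$.
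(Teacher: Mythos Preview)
Your proposal is correct and follows essentially the same approach as the paper: you observe that $(\mathcal A g^p)^{1/p}$ is radially decreasing so its superlevel sets are balls $B_\rho$, and then the defining inequality $\mathcal A g^p(x)\ge\lambda^p$ at $|x|=\rho$ rearranges immediately to $\lambda^p|B_\rho|\le\int_{B_\rho}g^p\le\|g\|_{L^p(\nu_n)}^p$. The paper's proof is exactly this, written without the polar-coordinate intermediary; your concern about the Lorentz norm convention is unnecessary since $\|\cdot\|_{L^{p,\infty}(\nu_n)}^\ast$ is defined in Section~\ref{section.notations} as the distributional quasinorm $\sup_\lambda \lambda\,\nu_n(\{|f|>\lambda\})^{1/p}$.
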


\bigskip

As an immediate consequence of Propositions \ref{operador.hardy} and \ref{acotacion.operador.hardy}, we have the following weak $L^p(\nu_n)$ bound for $M_{\nu_n}$, sharper than the one in Theorem \ref{lp.con.marcinkiewicz}.

\bigskip

\begin{teo}\label{dimension.free.lp.bounds}
Let $g:\real^n\,\longrightarrow\,\real$ be a radial, decreasing function. For each $p\in(2,\infty]$ one has
$$
\|M_{\nu_n}g\|_{L^{p,\infty}(\nu_n)}^{\ast}\leq \frac{p+2}{p-2}\|g\|_{L^p(\nu_n)}.
$$
\end{teo}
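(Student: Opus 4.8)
The plan is to combine Propositions \ref{operador.hardy} and \ref{acotacion.operador.hardy} directly. Fix $p \in (2,\infty)$ and a radial, decreasing function $g$ on $\real^n$. By Proposition \ref{operador.hardy}, for every $x \in \real^n$ we have the pointwise bound
\[
M_{\nu_n}g(x) \le \frac{p+2}{p-2}\,\bigl(\mathcal A g^p(x)\bigr)^{1/p}.
\]
Since the Lorentz quasinorm $\|\cdot\|_{L^{p,\infty}(\nu_n)}^\ast$ is monotone under pointwise domination (it depends only on the distribution function, which is monotone in the function) and positively homogeneous, applying it to both sides gives
\[
\|M_{\nu_n}g\|_{L^{p,\infty}(\nu_n)}^\ast \le \frac{p+2}{p-2}\,\bigl\|(\mathcal A g^p)^{1/p}\bigr\|_{L^{p,\infty}(\nu_n)}^\ast.
\]
Then Proposition \ref{acotacion.operador.hardy} bounds the right-hand factor by $\|g\|_{L^p(\nu_n)}$, which yields exactly the claimed inequality for finite $p$.

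For the endpoint $p = \infty$, the statement reduces to the elementary $L^\infty(\nu_n)$ bound with constant $1$ for $M_{\nu_n}$ (an average of $|g|$ never exceeds $\|g\|_\infty$), which is already recorded in the introduction; the factor $(p+2)/(p-2)$ tends to $1$ as $p\to\infty$, so this is consistent. One should note that the whole argument requires $g$ radial and decreasing, since both Propositions \ref{operador.hardy} and \ref{acotacion.operador.hardy} are stated only under that hypothesis, and the reduction above preserves nothing beyond the pointwise inequality, so no extra care is needed there.

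There is essentially no obstacle: the real content has been front-loaded into the two propositions, and this theorem is just their composition together with the trivial monotonicity and homogeneity properties of the weak-$L^p$ quasinorm. The only point worth stating carefully is that $\|\cdot\|_{L^{p,\infty}(\nu_n)}^\ast$ respects the pointwise order — this follows because for $0\le F\le G$ one has $\{F>\lambda\}\subseteq\{G>\lambda\}$, hence the distribution functions satisfy $d_F\le d_G$, and the quasinorm is an increasing functional of the distribution function. I would therefore present the proof in two or three lines, deferring all substance to the cited propositions.
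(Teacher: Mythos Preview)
Your proposal is correct and matches the paper's own approach exactly: the paper states immediately before the theorem that it is ``an immediate consequence of Propositions \ref{operador.hardy} and \ref{acotacion.operador.hardy},'' and gives no further proof. Your write-up simply makes explicit the monotonicity and homogeneity of the weak-$L^p$ quasinorm and handles the $p=\infty$ endpoint, which is entirely in line with the intended argument.
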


\bigskip

The other case that we will study here is the Gaussian measure $d\gamma_n(x)=\e^{-\pi|x|^2}\,dx$. In this case we will prove that the associated maximal function does not admit dimension-free $L^p(\gamma_n)$ bounds for any $1\leq p <\infty$:

\bigskip

\begin{teo}\label{teorema.gaussiana}
There exist absolute constants $a>1$ and $c>0$ such that for every $p$ in the range $1\leq p <\infty$,
$$
c_{\gamma_n,p} \geq c\, a^{n/p}.
$$
\end{teo}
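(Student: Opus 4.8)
The plan is to apply the lower bound \eqref{cota.constante} with the radially decreasing test function $g=\chi_{\beta\rho_n}$, where $\rho_n=\sqrt{n/(2\pi)}$ (so that $\e^{-\pi r^2}r^{n-1}$, the radial density of $\gamma_n$, peaks near $\rho_n$) and $\beta$ is a fixed constant with $\sqrt{2/3}<\beta<1$. Since $\gamma_n(B_{\beta\rho_n})=\gamma_n\{y:\sum_i y_i^2\le\beta^2\rho_n^2\}$ and $\sum_i y_i^2$ has mean $n/(2\pi)$ under $\gamma_n$, Cram\'er's theorem (the cumulant generating function of one summand $y_i^2$ is $-\tfrac12\log(1-\theta/\pi)$) gives $\gamma_n(B_{\beta\rho_n})\le \e^{-nI_\beta(1+o(1))}$ with $I_\beta=\tfrac12(\beta^2-1)+\log(1/\beta)>0$; more generally the exponential rate of $\gamma_n(B_{t\rho_n})$ is $I_t$ for $t<1$ (strictly decreasing on $(0,1)$ since $I_t'=t-1/t<0$) and $0$ for $t\ge1$. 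Thus, once we show that $M_{\gamma_n}\chi_{\beta\rho_n}\ge\lambda$ on a ball $B_{\alpha^*\rho_n}$ for some absolute constant $\lambda>0$ and some $\alpha^*>\beta$ (all $n$ large), \eqref{cota.constante} yields
\[
c_{\gamma_n,p}\ge \lambda\Big(\frac{\gamma_n(B_{\alpha^*\rho_n})}{\gamma_n(B_{\beta\rho_n})}\Big)^{1/p}\ge \lambda\,(c\,\e^{\kappa n})^{1/p}\ge c'\,a^{n/p},
\]
with $\kappa=I_\beta-I_{\alpha^*}>0$, $a=\e^{\kappa}>1$, and $c'$ absolute (the finitely many small $n$ being absorbed into $c'$ via the trivial bound $c_{\gamma_n,p}\ge1$).

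The core estimate is the claim that $M_{\gamma_n}\chi_{\beta\rho_n}(x)\ge\lambda$ for $|x|\le\alpha^*\rho_n$. For $|x|\le\beta\rho_n$ this is clear since $M_{\gamma_n}\chi_{\beta\rho_n}=1$ a.e.\ on $B_{\beta\rho_n}$. For $\beta\rho_n\le|x|\le\alpha^*\rho_n$, write $|x|=t\rho_n$ and test with the ball $B(x,|x|)$, which passes through the origin; one has $y\in B(x,|x|)$ iff $\langle y,x/|x|\rangle\ge|y|^2/(2|x|)$, so in polar coordinates (radius $=s\rho_n$, normalised surface measure on $S^{n-1}$) the relative measure of the spherical slice of $B(x,|x|)$ at radius $s\rho_n$ is $\sigma_{n-1}\{\omega_1\ge s/(2t)\}$, which is $\mathrm{poly}(n)\cdot(1-s^2/(4t^2))^{(n-1)/2}$ for $s/(2t)$ in a compact subinterval of $(0,1)$. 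Hence both $\gamma_n(B(x,|x|))$ and $\gamma_n(B(x,|x|)\cap B_{\beta\rho_n})$ equal the same $\mathrm{poly}(n)$ times $\int\e^{nh(s)}\,ds$ — over $(0,2t)$ and over $(0,\beta)$ respectively — where, after the crucial cancellation of the $\log s$ coming from the area factor $r^{n-1}$ against the one from the cap,
\[
h(s)=-\tfrac{s^2}{2}+\log s+\tfrac12\log(4t^2-s^2)-\log(2t).
\]
A direct computation shows $h$ has a unique maximiser $s^*(t)$ on $(0,2t)$, with $s^*(t)^2=2t^2+1-\sqrt{4t^4+1}$, and $s^*$ is continuous and increasing in $t$ with $s^*(t)\to1$ as $t\to\infty$. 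The inequality $s^*(\beta)<\beta$ is equivalent to $(\beta^2+1)^2<4\beta^4+1$, i.e.\ $2\beta^2<3\beta^4$, i.e.\ $\beta>\sqrt{2/3}$; hence by continuity $s^*(t)<\beta$ for $t$ in an interval $[\beta,\alpha^*]$ with $\alpha^*>\beta$ (and one can take $\alpha^*>1$, e.g.\ with $\beta=0.9$). Since then $s^*(t)\in(0,\beta)$ is interior to both ranges of integration, Laplace's method applied to both integrals (nondegenerate maximum, identical integrand) gives $\gamma_n(B(x,|x|)\cap B_{\beta\rho_n})/\gamma_n(B(x,|x|))\to1$ (only a factor $1/2$ is lost at the endpoint $t=\alpha^*$ where $s^*=\beta$), uniformly for $t\in[\beta,\alpha^*]$; consequently $M_{\gamma_n}\chi_{\beta\rho_n}(x)\ge\tfrac13$ there for $n$ large.

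Combining the two pieces gives $\gamma_n\{M_{\gamma_n}\chi_{\beta\rho_n}\ge\tfrac13\}\ge\gamma_n(B_{\alpha^*\rho_n})$ and hence the displayed chain of inequalities with $\lambda=\tfrac13$. The main obstacle is making the core estimate rigorous and uniform: one must (i) control $\sigma_{n-1}\{\omega_1\ge c\}$ sharply enough that the $n$-dependent polynomial prefactors are seen to cancel in the ratio; (ii) carry out Laplace's method uniformly in $t\in[\beta,\alpha^*]$, including the degenerate endpoint behaviour at $t=\alpha^*$; and (iii) verify the elementary but decisive fact that the maximiser $s^*(t)$ of $h$ stays below $\beta$ throughout $[\beta,\alpha^*]$, which reduces to $\beta>\sqrt{2/3}$ at $t=\beta$ and is propagated by the monotonicity of $s^*$. (The geometric content of (iii) is that, although the radial density of $\gamma_n$ favours larger radii inside $B(x,|x|)$, the spherical cap shrinks exponentially off the centre, and for $\beta>\sqrt{2/3}$ the balance point falls strictly inside $B_{\beta\rho_n}$.)
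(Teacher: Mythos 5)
Your proposal is correct, and its decisive computations check out: $h$ is strictly concave on $(0,2t)$ with unique maximiser $s^*(t)^2=1+2t^2-\sqrt{1+4t^4}$, the condition $s^*(\beta)<\beta$ is indeed equivalent to $\beta>\sqrt{2/3}$, $s^*$ is increasing in $t$, and with $\beta=0.9$ one has $s^*(t)<0.9$ for $t$ up to about $1.126$, so $\alpha^*=1.1>1$ is admissible; the steps you flag as remaining (cap estimates, uniform Laplace method) are routine and of the same nature as the paper's estimates (3.2). The skeleton is the same as the paper's: a lower bound for $c_{\gamma_n,p}$ via (1.5) with the characteristic function of a centred ball whose radius is a fixed fraction of the critical radius $\sqrt{n/(2\pi)}$, and a polar-coordinate/spherical-cap computation of Gaussian measures of ball intersections governed by a Laplace-type exponent (your $h$, the paper's $F$). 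The tactics differ, though: the paper evaluates $M_{\gamma_n}\chi_{r_n}$ only at $|x_n|=T_n$ with a test ball of radius $RT_n$, $R<1$, obtains the level $c/\sqrt{n}$ there, and needs Lemma \ref{maximal.es.decreciente} to pass to a superlevel set; you use the ball $B(x,|x|)$ through the origin and get an absolute level ($\lambda=\tfrac13$) on all of $B_{\alpha^*\rho_n}$ with $\alpha^*>1$, bypassing both the ray-monotonicity lemma and the $1/\sqrt{n}$ loss. This makes the final bound $c\,a^{n/p}$ transparently uniform in $p$ (with the paper's $c/\sqrt n$ level some care is needed if $p$ is allowed to grow with $n$), at the cost of the extra localisation constraint $\beta>\sqrt{2/3}$. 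Two small points for the write-up: when $\alpha^*>1$ the exponential rate of $\gamma_n(B_{\alpha^*\rho_n})$ is $0$, so $\kappa$ should simply be $I_\beta$ (your $I_\beta-I_{\alpha^*}$ is the relevant quantity only if $\alpha^*\leq1$, and then you need a Cram\'er-type lower bound, not just Chernoff); and the Laplace argument needs a global upper bound on the integrand, valid also for $s/(2t)$ near $0$ and $1$, so that the contribution away from $s^*$ is genuinely negligible.
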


\bigskip

This result can be extended to the case where the density is $f_\alpha(|x|)=\e^{-|x|^\alpha}$, with $\alpha>0$.

\bigskip

\begin{teo}\label{teorema.exponencial}
Let $\gamma_{\alpha,n}$ be the measure given for $\alpha>0$ by $d\gamma_{\alpha,n}(x)=\e^{-|x|^\alpha}\,dx$. There exist constants $a=a(\alpha)>1$ and $c_\alpha>0$ such that the corresponding weak $L^p(\gamma_{\alpha,n})$ constant satisfies
$$
c_{\gamma_n^\alpha,p}\geq c_\alpha\, a_\alpha^{n/p}.
$$
for every $n$ and $1\leq p<\infty$.
\end{teo}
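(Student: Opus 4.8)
The plan is to run the proof of Theorem~\ref{teorema.gaussiana} essentially unchanged, isolating the two places where the Gaussian profile $\e^{-\pi|x|^2}$ is actually used and substituting the corresponding facts about $\e^{-|x|^\alpha}$. Since $c_{\mu,p}$ is unaffected if $\mu$ is multiplied by a positive constant, I would first normalize $\gamma_{\alpha,n}$ to a probability measure. Recall that in the Gaussian argument one applies \eqref{cota.constante} to $\chi_{r}$ for a ball $B_r$ centred at the origin whose radius is comparable to the mode $t_\ast$ of the radial distribution; the density then enters only through (i) the value of $t_\ast$ and (ii) Laplace-type asymptotics for integrals $\int_0^\infty t^{n-1}\e^{-\pi t^2}\varphi(t)\,dt$, whereas the geometric input — the fraction of a sphere $\{|y|=s\}$ contained in a ball $B(x,R)$, of order $\bigl((1-\psi(s)^2)_+\bigr)^{(n-1)/2}$ for an explicit function $\psi(s)=\psi(s,|x|,R)$ — does not see the density.

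For $\gamma_{\alpha,n}$ the radial distribution is proportional to $t^{n-1}\e^{-t^\alpha}$, with mode $t_\ast=t_\ast(n,\alpha)=\bigl(\tfrac{n-1}{\alpha}\bigr)^{1/\alpha}$, and Laplace's method, using $t_\ast^\alpha=\tfrac{n-1}{\alpha}$, gives for each fixed $\rho>0$
\[
\gamma_{\alpha,n}(B_{\rho\, t_\ast})=\bigl(\rho^\alpha\,\e^{\,1-\rho^\alpha}\bigr)^{(n-1)/\alpha}\,q_n,
\]
with $q_n$ of polynomial size in $n$, and similarly for the measures of the other balls arising in the proof, with $t^2$ replaced by $t^\alpha$ throughout and the exponent $n/2$ replaced by $n/\alpha$. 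Since $\rho^\alpha\e^{1-\rho^\alpha}<1$ for $\rho\neq1$, exactly as in the case $\alpha=2$, all the monotonicity and sign conditions of the original argument are preserved.

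I would then repeat the Gaussian argument with the parameters redefined by the same relations (now with $2$ replaced by $\alpha$): choose constants $0<\rho<\sigma$, depending only on $\alpha$, such that for $|x|\le\sigma t_\ast$ the ball $B\bigl(x,|x|+\rho t_\ast\bigr)$ contains $B_{\rho t_\ast}$ while keeping its $\gamma_{\alpha,n}$-measure comparable, up to polynomial factors, to $\gamma_{\alpha,n}(B_{\rho t_\ast})$ — this is the step exploiting that such a ball reaches past the mode sphere $\{|y|=t_\ast\}$ — so that $M_{\gamma_{\alpha,n}}\chi_{\rho t_\ast}\ge\lambda$ on $B_{\sigma t_\ast}$ with $\lambda$ only polynomially small. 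As $\gamma_{\alpha,n}(B_{\sigma t_\ast})/\gamma_{\alpha,n}(B_{\rho t_\ast})$ equals $\bigl(\sigma^\alpha\e^{1-\sigma^\alpha}/\rho^\alpha\e^{1-\rho^\alpha}\bigr)^{(n-1)/\alpha}$ up to polynomial factors, taking the $1/p$-th power in \eqref{cota.constante} produces a factor $\asymp a_\alpha^{\,n/p}$ with $a_\alpha:=\bigl(\sigma^\alpha\e^{1-\sigma^\alpha}/\rho^\alpha\e^{1-\rho^\alpha}\bigr)^{1/\alpha}>1$; absorbing the polynomial factors gives $c_{\gamma_{\alpha,n},p}\ge c_\alpha\,a_\alpha^{\,n/p}$, with $a_\alpha>1$ and $c_\alpha>0$ depending only on $\alpha$.

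The main difficulty is uniformity rather than a new idea. One must check that the Laplace expansions hold with remainders controlled uniformly enough, and here there is a genuine difference with the Gaussian case: the mode of $t^{n-1}\e^{-t^\alpha}$ has width of order $n^{1/\alpha-1/2}$, bounded only when $\alpha=2$, so the polynomial prefactors change their nature with $\alpha$ and must be shown irrelevant to the exponential rate. One must also verify that $\rho,\sigma$ can be fixed once and for all so that the bound holds simultaneously for all $p\in[1,\infty)$, consistently with $a_\alpha^{n/p}\to$ constant as $p\to\infty$ and with the $L^\infty(\gamma_{\alpha,n})$ bound equal to $1$. A shortcut worth trying is the substitution $t\mapsto t^{\alpha/2}$, which turns $\int_0^\infty t^{n-1}\e^{-t^\alpha}\,dt$ into a Gaussian-type radial integral corresponding to dimension $2n/\alpha$; if the spherical factors can be matched compatibly, this would let one read off the estimate from Theorem~\ref{teorema.gaussiana} with $a_\alpha=a^{2/\alpha}$.
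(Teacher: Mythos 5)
Your reduction to the two estimates of Lemma \ref{lema.exponencial.alfa} --- the exponential ratio $\gamma_{\alpha,n}(B_{|x_n|})/\gamma_{\alpha,n}(B_{r_n})\geq c\,a^n$ via the mode $T_{\alpha,n}=((n-1)/\alpha)^{1/\alpha}$ and Laplace asymptotics, together with a lower bound for $M_{\gamma_{\alpha,n}}\chi_{r_n}$ that is only polynomially small --- is exactly the paper's scheme, and your treatment of the first estimate is essentially correct. But the ball you choose for the second estimate does not do what you claim, and that claim is the heart of the proof. With $0<\rho<\sigma$ and $|x|=\sigma t_\ast$, a point $y$ with $|y|=st_\ast$ lies in $B\bigl(x,|x|+\rho t_\ast\bigr)$ iff $\cos\theta\geq (s^2-2\sigma\rho-\rho^2)/(2s\sigma)$, where $\theta$ is the angle between $y$ and $x$; hence the ball contains \emph{more than half} of every sphere $\{|y|=st_\ast\}$ with $s\leq s_1:=\min\bigl(1,\sqrt{\rho^2+2\sigma\rho}\,\bigr)$, and therefore
\[
\gamma_{\alpha,n}\bigl(B(x,|x|+\rho t_\ast)\bigr)\ \geq\ \tfrac12\,\gamma_{\alpha,n}(B_{s_1t_\ast}).
\]
Since $u\mapsto u\,\e^{(1-u^\alpha)/\alpha}$ is strictly increasing on $(0,1]$ and $s_1>\rho$, the two-sided centred-ball estimate (\ref{bolas.centradas.exp.alpha}) shows that $\gamma_{\alpha,n}(B_{s_1t_\ast})/\gamma_{\alpha,n}(B_{\rho t_\ast})$ grows exponentially in $n$. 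So $\gamma_{\alpha,n}(B(x,|x|+\rho t_\ast))$ is exponentially, not polynomially, larger than $\gamma_{\alpha,n}(B_{\rho t_\ast})$, your $\lambda$ is exponentially small, and the resulting bound $\lambda\,a^{n/p}$ collapses (already for moderate $p$, and certainly it cannot hold for all $p\in[1,\infty)$). Taking $\rho\geq1$ to avoid this kills the exponential gain in the other factor, so no choice of $\rho,\sigma$ rescues this configuration. Note also that this is \emph{not} the configuration of the Gaussian proof you say you are repeating: there the ball $B(x_n,R_n)$ has radius $R_n=RT_n$ with $1-r<R<1$, so it contains neither $B_{r_n}$ nor the origin.

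What the paper actually does is place $x_n$ on the mode sphere, take $R_n=RT_{\alpha,n}$ with $1-r<R<r<1$, write both $\gamma_{\alpha,n}(B(x_n,R_n))$ and $\gamma_{\alpha,n}(B(x_n,R_n)\cap B_{r_n})$ as integrals over spherical caps, and --- the key trick --- choose $r^2=t_\alpha$, the maximum point of $F_\alpha(t)=\bigl(t-\bigl(\tfrac{1+t-R^2}{2}\bigr)^2\bigr)\e^{-2t^{\alpha/2}/\alpha}$, so that the Laplace peak of the integral for the whole ball sits exactly on $\partial B_{r_n}$ and the intersection captures a $c/\sqrt{n}$ fraction of its mass. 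The one genuinely new verification in the $\alpha$ case, which your sketch dismisses as ``all the monotonicity and sign conditions are preserved,'' is precisely that $t_\alpha<1$ (so that $r<1$ and the exponential gain in (\ref{cociente.bolas.centradas.exp.alpha}) survives); the paper proves this by checking that $F_\alpha'<0$ on $[1,(1+R)^2)$. Your closing remark about the substitution $t\mapsto t^{\alpha/2}$ reducing to dimension $2n/\alpha$ is a nice observation for the radial integrals, but, as you concede, it does not transform Euclidean balls into Euclidean balls, so it cannot replace the cap computation.
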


\bigskip

\section{Notations and technical lemmas.}\label{section.notations}

\bigskip

The following lemma will be the starting-point in the proofs of Proposition \ref{lp.caracteristicas.bolas} and Theorems \ref{teorema.gaussiana} and \ref{teorema.exponencial}.

\bigskip

\begin{lema}\label{prop.estimacion.constante} Let $\mu$ be a rotation-invariant Radon measure in $\real^n$. Then for each $x\in\real^n$ and $r,R>0$ such that $\mu(B(x,R))>0$, we have that
\begin{equation}\label{estimacion.constante}
c_{\mu,p}\geq M_\mu \chi_r(x)\left(\frac{\mu(B_{|x|})}{\mu(B_r)}\right)^{1/p}\geq \frac{\mu(B(x,R)\cap B_r)}{\mu(B(x,R))}\left(\frac{\mu(B_{|x|})}{\mu(B_r)}\right)^{1/p}.
\end{equation}
\end{lema}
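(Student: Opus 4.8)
The plan is to prove the two inequalities in \eqref{estimacion.constante} separately, reading the chain from right to left. The rightmost inequality is immediate from the definition of the maximal operator: since $\chi_r = \chi_{B_r}$ and $B_{|x|}$ is irrelevant to the first factor, we have for the given $R>0$ with $\mu(B(x,R))>0$
\[
M_\mu \chi_r(x) \;\geq\; \frac{1}{\mu(B(x,R))}\int_{B(x,R)}\chi_r\,d\mu \;=\; \frac{\mu(B(x,R)\cap B_r)}{\mu(B(x,R))},
\]
and multiplying both sides by the nonnegative factor $\bigl(\mu(B_{|x|})/\mu(B_r)\bigr)^{1/p}$ gives the right-hand inequality. (If $\mu(B_r)=0$ the statement is vacuous since then $\chi_r=0$ $\mu$-a.e.; if $\mu(B_{|x|})=0$ the middle and right terms are $0$ and there is nothing to prove.)

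For the left-hand inequality I would invoke \eqref{cota.constante}, which says $c_{\mu,p}\geq \lambda\,\mu(\{y:M_\mu g(y)\geq\lambda\})^{1/p}/\|g\|_{L^p(\mu)}$ for every $\lambda>0$ and every nonzero $g\in L^p(\mu)$. Apply this with $g=\chi_r$ and $\lambda = M_\mu\chi_r(x)$. The key observation is the rotation invariance of $\mu$: the function $y\mapsto M_\mu\chi_r(y)$ is radial, because $\chi_r$ is radial and $M_\mu$ commutes with rotations when $\mu$ is rotation-invariant (a rotation maps balls $B(y,R)$ to balls $B(\rho y,R)$ and preserves $\mu$). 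Hence $M_\mu\chi_r(y)=M_\mu\chi_r(x)=\lambda$ for every $y$ with $|y|=|x|$, and more: since $\chi_r$ is also radially decreasing, $M_\mu\chi_r$ is radially decreasing too (balls centred at points closer to the origin capture at least as large an average of $\chi_r$ — this needs the rotation invariance again, comparing $B(y,R)$ with $|y|\le|x|$ against $B(x,R)$), so $\{y:M_\mu\chi_r(y)\geq\lambda\}\supseteq B_{|x|}$. Therefore $\mu(\{y:M_\mu\chi_r(y)\geq\lambda\})\geq\mu(B_{|x|})$.

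Plugging this into \eqref{cota.constante}, together with $\|\chi_r\|_{L^p(\mu)}=\mu(B_r)^{1/p}$, yields
\[
c_{\mu,p}\;\geq\;\frac{M_\mu\chi_r(x)\,\mu(B_{|x|})^{1/p}}{\mu(B_r)^{1/p}}\;=\;M_\mu\chi_r(x)\left(\frac{\mu(B_{|x|})}{\mu(B_r)}\right)^{1/p},
\]
which is the left-hand inequality. I expect the only genuinely delicate point to be the claim that $M_\mu\chi_r$ is radially \emph{decreasing}, i.e. that the superlevel set contains the whole ball $B_{|x|}$ rather than merely the sphere of radius $|x|$; establishing this cleanly requires using that for $|y|\le|x|$ one can, after a rotation that fixes $\mu$, compare the average of $\chi_r$ over $B(y,R)$ with that over a suitable ball centred on the sphere $|z|=|x|$ and then over $B(x,R)$. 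If one prefers to avoid the monotonicity argument, a slightly weaker but sufficient route is to note that $M_\mu\chi_r$ is radial and apply \eqref{cota.constante} with the superlevel set replaced by just the sphere $\{|y|=|x|\}$ when it has positive measure, or to approximate; in the applications only the stated form is needed, so I would present the monotonicity argument in full.
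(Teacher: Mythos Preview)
Your overall structure matches the paper's exactly: the second inequality in \eqref{estimacion.constante} is immediate from the definition of $M_\mu$, and the first follows from \eqref{cota.constante} with $g=\chi_r$ and $\lambda=M_\mu\chi_r(x)$ once one knows that $\{y:M_\mu\chi_r(y)\ge\lambda\}\supseteq B_{|x|}$. The only real content is therefore the radial monotonicity of $M_\mu\chi_r$, which you correctly flag as the delicate point but do not actually prove.

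The paper isolates this as a separate statement, Lemma~\ref{maximal.es.decreciente}: for an \emph{arbitrary} Radon measure $\mu$ (not only a rotation-invariant one), $M_\mu\chi_r$ is nonincreasing along every ray from the origin; combined with the rotation invariance of $\mu$ this gives the full radial monotonicity you need. The argument is \emph{not} the same-radius comparison you sketch. Given $y=\alpha x$ with $0<\alpha<1$ and $y\notin B_r$, and given $R>0$, one selects a \emph{different} radius $T$ so that $\partial B(x,R)\cap\partial B_r=\partial B(y,T)\cap\partial B_r$; a short set-arithmetic computation with the four quantities $A=\mu(B(y,T)\setminus B(x,R))$, $B=\mu(B(x,R)\cap B_r)$, $C=\mu(B(y,T)\setminus B_r)$, $D=\mu(B(x,R)\setminus B(y,T))$ then gives
\[
\frac{\mu(B(x,R)\cap B_r)}{\mu(B(x,R))}=\frac{B}{B+C+D}\le\frac{B}{B+C}\le\frac{A+B}{A+B+C}=\frac{\mu(B(y,T)\cap B_r)}{\mu(B(y,T))}.
\]
Your heuristic of comparing $B(y,R)$ with $B(x,R)$ at the same radius does not lead anywhere obvious, and the fallback you mention (using only the sphere $\{|y|=|x|\}$) is useless for the absolutely continuous measures $\nu_n$, $\gamma_n$, $\gamma_{\alpha,n}$ that are the paper's actual concern.
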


\bigskip

In the proof of this lemma we will use the following result, valid for a general measure $\mu$.

\bigskip

\begin{lema}\label{maximal.es.decreciente} Let $\mu$ be a Radon measure on $\real^n$. Then the maximal function $M_\mu \chi_r$ is decreasing on each ray from the origin. That is, for any $x\in\real^n$ and $y=\alpha x$ with $0<\alpha<1$ we have that
\begin{equation}
M_\mu \chi_r(x)\leq M_\mu \chi_r(y).
\end{equation}
\end{lema}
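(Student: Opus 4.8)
The plan is to reduce the lemma to the following claim: for every ball $B(x,R)$ with $\mu(B(x,R))>0$ there is a radius $R'>0$ with $\mu(B(y,R'))>0$ such that
$$
\frac{\mu(B(y,R')\cap B_r)}{\mu(B(y,R'))}\ \geq\ \frac{\mu(B(x,R)\cap B_r)}{\mu(B(x,R))}.
$$
Granting this for every admissible $R$ and taking the supremum over $R$ on both sides yields $M_\mu\chi_r(x)\leq M_\mu\chi_r(y)$, which is the assertion. We may assume $x\neq 0$ (otherwise $y=x$) and, since the displayed inequality is trivial when $\mu(B(x,R)\cap B_r)=0$, we may also assume $\mu(B(x,R)\cap B_r)>0$; in particular $B(x,R)\cap B_r\neq\emptyset$.

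The construction of $R'$ is the crux. The idea is to fuse the two constraints $|z-x|<R$ and $|z|<r$ that cut out $B(x,R)\cap B_r$ into a single constraint, by forming their convex combination with weights $\alpha$ and $1-\alpha$, and to note that this yields again a Euclidean ball, now centred at $y=\alpha x$. Indeed, the polarization identity
$$
|z-\alpha x|^2=(1-\alpha)\,|z|^2+\alpha\,|z-x|^2-\alpha(1-\alpha)\,|x|^2
$$
shows that the set $S:=\bigl\{z\in\real^n:(1-\alpha)|z|^2+\alpha|z-x|^2<(1-\alpha)r^2+\alpha R^2\bigr\}$ is exactly the ball $B(y,R')$ with $(R')^2=(1-\alpha)r^2+\alpha R^2-\alpha(1-\alpha)|x|^2$. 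Moreover $S$ is automatically squeezed between the intersection and the union of $B(x,R)$ and $B_r$: if $z\in B(x,R)\cap B_r$ then $|z-x|<R$ and $|z|<r$, so $z\in S$; and if $z\in S$ with $|z|\geq r$, then $(1-\alpha)|z|^2\geq(1-\alpha)r^2$ forces $\alpha|z-x|^2<\alpha R^2$, so $z\in B(x,R)$. Hence
$$
B(x,R)\cap B_r\ \subseteq\ S\ \subseteq\ B(x,R)\cup B_r.
$$
Since $B(x,R)\cap B_r$ is nonempty, so is $S$, whence $(R')^2>0$ and $R'>0$; and $\mu(B(y,R'))=\mu(S)\geq\mu(S\cap B_r)\geq\mu(B(x,R)\cap B_r)>0$, so $B(y,R')$ is admissible in the definition of $M_\mu\chi_r(y)$.

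The inclusions above give $\mu(S\cap B_r)\geq\mu(B(x,R)\cap B_r)$ (the left-hand set contains the right-hand one) and $\mu(S\setminus B_r)\leq\mu(B(x,R)\setminus B_r)$ (since $S\setminus B_r\subseteq B(x,R)\setminus B_r$). Writing $a=\mu(B(x,R)\cap B_r)$, $b=\mu(B(x,R)\setminus B_r)$, $a'=\mu(S\cap B_r)$ and $b'=\mu(S\setminus B_r)$, we have $a'\geq a>0$ and $0\leq b'\leq b$, so the displayed inequality becomes $a'/(a'+b')\geq a/(a+b)$; this holds because $t\mapsto t/(t+b')$ is nondecreasing on $[0,\infty)$ and $s\mapsto a/(a+s)$ is nonincreasing, giving $a'/(a'+b')\geq a/(a+b')\geq a/(a+b)$. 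I expect the only real obstacle to be discovering the ball $S$: the key points are that a convex combination of the two quadratic constraints is still a Euclidean ball, that its centre is precisely $y$, and that such a ball is then forced to lie between the intersection and the union of $B(x,R)$ and $B_r$; once this is in hand the comparison of measures is routine monotonicity. (The argument is written for open balls; for closed balls it is identical, except that the single tangency configuration in which $S$ reduces to the point $\{y\}$ needs a separate one-line argument, letting the radius of a ball about $y$ shrink to $0$ and using $\mu(B(y,\delta))\downarrow\mu(\{y\})$.)
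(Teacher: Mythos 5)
Your proof is correct. The overall strategy is the same as the paper's: for each admissible ball $B(x,R)$ you produce a single competitor ball centred at $y$ that is squeezed between $B(x,R)\cap B_r$ and $B(x,R)\cup B_r$, and the final ratio comparison $a'/(a'+b')\geq a/(a+b')\geq a/(a+b)$ is precisely the paper's chain $B/(B+C+D)\leq B/(B+C)\leq (A+B)/(A+B+C)$. The genuine difference is in how the competitor ball is obtained. The paper picks $T$ so that $\partial B(y,T)$ meets $\partial B_r$ in the same $(n-2)$-sphere as $\partial B(x,R)$ does, and then treats the inclusions $B(x,R)\cap B_r\subseteq B(y,T)$ and $B(y,T)\setminus B_r\subseteq B(x,R)$ (which underlie its $A,B,C,D$ bookkeeping) as geometrically evident; note also that this prescription of $T$ only pins down a ball when the two boundary spheres actually intersect. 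Your ball $S$, defined as the sublevel set of the convex combination $(1-\alpha)|z|^2+\alpha|z-x|^2$, is in the generic case the \emph{same} ball (its boundary passes through $\partial B(x,R)\cap\partial B_r$), but the algebraic definition turns the sandwich inclusions into a two-line verification, handles the degenerate configurations (boundaries not meeting, $B_r$ inside $B(x,R)$, $y\in B_r$) uniformly without case distinctions, and makes the admissibility check $\mu(B(y,R'))>0$ and the positivity of $(R')^2$ explicit. In short, your route buys a self-contained and fully rigorous version of the step the paper leaves to the picture, at no extra cost.
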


\bigskip

\begin{proof}[Proof that Lemma \ref{maximal.es.decreciente} implies Lemma \ref{prop.estimacion.constante}] The first inequality in (\ref{estimacion.constante}) follows if we let $g=\chi_r$ and $\lambda=M_\mu \chi_r(x)$ in (\ref{cota.constante}), since Lemma \ref{maximal.es.decreciente} implies that $M_\mu \chi_r(y)\geq M_\mu \chi_r(x)$ for any $y\in B_{|x|}$. The second inequality is easy.
\end{proof}

\bigskip

\begin{proof}[Proof of Lemma \ref{maximal.es.decreciente}]
First we discard the trivial case when $y \in B_r$, since then $M_\mu \chi_r(y) = 1$ and we always have $M_\mu \chi_r(x) \leq 1$. Assume that $y$ and (consequently) $x$ are not in $B_r$. It would be enough to show that for each $R>0$ we can find a $T>0$ such that
\[
\frac {\mu(B(x,R)\cap B_r)}{\mu(B(x,R))} \leq \frac{\mu(B(y,T)\cap B_r)}{\mu(B(y,T))}.
\]
Take $T$ such that $\partial B(x,R) \cap \partial B_r = \partial B(y,T) \cap \partial B_r$. We call
\[
\begin{array}{rclcrcl}
A &=& \mu(B(y,T)\setminus B(x,R)), &\quad& B &=& \mu(B(x,R)\cap B_r),\\
C &=& \mu(B(y,T)\setminus B_r), &\quad& D &=& \mu(B(x,R)\setminus B(y,T)).
\end{array}
\]
Now it is clear that
\[
\frac {\mu(B(x,R)\cap B_r)}{\mu(B(x,R))}=\frac B{B+C+D} \leq \frac B{B+C} \leq \frac{A+B}{A+B+C} = \frac{\mu(B(y,T)\cap B_r)}{\mu(B(y,T))}
\]
\end{proof}

\bigskip

\begin{xrem}
In order to obtain lower bounds for $c_{\mu,p}$ using (\ref{estimacion.constante}), there is no point in considering the case $|x|<r$, since it will never lead to a lower bound greater than 1.
\end{xrem}

\bigskip

We now introduce solid spherical caps. Given a ball $B_\rho$ and a vector $y\neq 0$ in $B_\rho$, consider the hyperplane $y+y^\bot$, which divides the ball into two closed sets. We focus on the one of these sets which does not contain the origin. Its diameter is $2L=2\sqrt{\rho^2-|y|^2}$.  We denote this set by $A_{\rho}(L)$, and any set congruent with it will be called a solid spherical cap. The height of this cap is given by the function
\begin{equation}\label{height.of.cap}
h(\rho,L)=\rho-\sqrt{\rho^2-L^2}.
\end{equation}
Denoting by $\omega_{k-1}$ the area of the unit sphere in $\real^k$, we have for the Lebesgue measure of $A_\rho(L)$
\[
|A_{\rho}(L)| = \int_{|y|}^\rho \frac{\omega_{n-2}}{n-1} (\rho^2-s^2)^\frac{n-1}2\,ds = \frac{\omega_{n-2}}{n-1} \int_0^L\frac{t^n}{\sqrt{\rho^2-t^2}}\,dt,
\]
where the last equality comes from the change of variables $t=\sqrt{\rho^2-s^2}$. Since $\sqrt{\rho^2-L^2}<\sqrt{\rho^2-t^2}< \rho$, we obtain that
\begin{equation}\label{medida.casquete}
\frac{\omega_{n-2}}{n^2-1}L^{n+1}\frac1{\rho} \leq |A_{\rho}(L)|\leq \frac{\omega_{n-2}}{n^2-1}L^{n+1}\frac1{\sqrt{\rho^2-L^2}}.
\end{equation}

\bigskip

We finish this section by briefly stating the definitions and some properties of Lorentz spaces that will be used. Let $\mu$ be a Radon measure in $\real^n$. Given a measurable function $f$, we denote by $f^\ast$ its non-increasing rearrangement with respect to $\mu$. Let $1\leq p<\infty$. The quasinorm of $f$ in the Lorentz space $L^{p,q}(\mu)$ with $1\leq q<\infty$ is defined by
\[
\|f\|_{L^{p,q}(\mu)}^\ast = \left(\frac qp\int_0^\infty [s^{1/p}f^\ast(s)]^q\,\frac{ds}s\right)^{1/q},
\]
and for $q=\infty$ by
\[
\|f\|_{L^{p,\infty}(\mu)}^\ast = \sup_{s>0} s^{1/p} f^\ast(s)=\sup_{\lambda>0}\lambda\,\mu(\{|f|>\lambda\})^{1/p},
\]
with the usual agreement that $\|f\|_{L^{\infty,\infty}(\mu)}^\ast = \|f^\ast\|_{L^{\infty}(\real)} = \|f\|_{L^{\infty}(\mu)}$. In most of the cases this is not a norm, since the triangle inequality may fail. However, the spaces $L^{p,q}(\mu)$ admit a norm denoted $\|\cdot\|_{L^{p,q}(\mu)}$ for $1<p\leq\infty$ and $1\leq q\leq\infty$.
%
As quasinorms, $\|\cdot\|_{L^{p,q}(\mu)}^\ast$ and $\|\cdot\|_{L^{p,q}(\mu)}$ are equivalent in the sense that
\[
\|f\|_{L^{p,q}(\mu)} \leq \|f\|_{L^{p,q}(\mu)}^\ast \leq \frac p{p-1}\|f\|_{L^{p,q}(\mu)},
\]
for any $1<p<\infty$ and $1\leq q\leq \infty$.

\bigskip

For more details, see Chapter V, \S 3 of \cite{SteinWeiss}.

\bigskip

\section{Lebesgue measure restricted to the unit ball.}\label{lebesgue.measure}

\bigskip

This section is mainly devoted to the proofs of Theorem \ref{no.acotacion.hasta.l2} and Proposition \ref{lp.caracteristicas.bolas}.

\bigskip

The following lemma explains why in both proofs it is enough to concentrate on the situation when $|x|=1$.

\bigskip

\begin{lema}\label{frontera.bola.unidad}
For any $r<1$ and each $x\in B_1$ we have that
$$
M_{\nu_n}\chi_r(x)\leq M_{\nu_n}\chi_{r/|x|}\left(\frac x{|x|}\right).
$$
\end{lema}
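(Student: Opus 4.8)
The plan is to reduce the estimate on $M_{\nu_n}\chi_r(x)$ to the corresponding estimate at the point $x/|x|$ on the unit sphere by a scaling argument. Fix $x \in B_1$ with $x \neq 0$ (the case $x=0$ is trivial since both sides equal $1$ when $r<1$; in fact $M_{\nu_n}\chi_r(0)=1$), and write $t=|x|<1$. Let $R>0$ be an arbitrary radius realizing (approximately) the supremum in $M_{\nu_n}\chi_r(x)$, so we must control
\[
\frac{\nu_n(B(x,R)\cap B_r)}{\nu_n(B(x,R))}=\frac{|B(x,R)\cap B_r\cap B_1|}{|B(x,R)\cap B_1|}.
\]
First I would dispose of the case $B(x,R)\subset B_1$, where the denominator has no truncation: here $\nu_n(B(x,R))=|B(x,R)|$ and one checks directly (e.g.\ by the monotonicity already used in Lemma \ref{maximal.es.decreciente}, or by an elementary geometric comparison) that the averaging ratio is bounded by the value of $M_{\nu_n}\chi_{r/t}$ at $x/t$; actually in this regime one can even compare with the Lebesgue maximal function $M\chi_r(x)$ and scale by the factor $1/t$. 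So the essential case is when $B(x,R)$ sticks out of $B_1$, i.e.\ $R>1-t$.

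The key step is the scaling. Consider the dilation $y\mapsto y/t$, which maps $B_1$ onto $B_{1/t}$, maps $B_r$ onto $B_{r/t}$, and maps $B(x,R)$ onto $B(x/t,R/t)$; note $r/t<1/t$ and $|x/t|=1$. Since Lebesgue measure scales by $t^{-n}$ under this map, the truncated ratio above equals
\[
\frac{|B(x/t,R/t)\cap B_{r/t}\cap B_{1/t}|}{|B(x/t,R/t)\cap B_{1/t}|}.
\]
Now I would compare this with the analogous ratio where $B_{1/t}$ is replaced by the full space, i.e.\ where the ambient truncation is at radius $1$ relative to the center $x/t$ on the sphere of radius $1$ — but this is exactly a value of the form $\nu_n(B(x/t,T)\cap B_{r/t})/\nu_n(B(x/t,T))$ after rescaling back, hence is bounded by $M_{\nu_n}\chi_{r/t}(x/t)$. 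The point is that enlarging the ambient ball from $B_1$ (equivalently $B_{1/t}$) only \emph{helps}: we are removing a part of $B(x,R)$ that lies outside $B_1$ and hence, being on the far side from the origin, contains no points of $B_r$. So passing from the truncated-at-$B_1$ ratio to the truncated-at-$B_1$-centered-at-$x/|x|$ ratio can only increase the quotient, by the same $\frac{B}{B+C+D}\le\frac{A+B}{A+B+C}$ type inequality used in the proof of Lemma \ref{maximal.es.decreciente}.

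The main obstacle I anticipate is making precise the geometric claim that the portion of $B(x,R)$ lying outside $B_1$ is disjoint from $B_r$ (so that discarding it increases the ratio), and handling uniformly the transition between the ``interior'' case $R\le 1-t$ and the ``protruding'' case $R>1-t$; in the interior case one must be a little careful because the rescaled ball $B(x/t,R/t)$ may itself protrude from $B_{1/t}$ only if $R/t>1/t-1$, i.e.\ $R>1-t$, so the two cases actually match up cleanly. Once the geometry is pinned down, taking the supremum over $R$ on the left gives the claimed inequality, since for each $R$ we have produced a radius about $x/|x|$ (namely $R/|x|$, possibly adjusted as in Lemma \ref{maximal.es.decreciente}) whose averaging ratio for $\chi_{r/|x|}$ dominates the original one.
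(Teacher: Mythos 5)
Your proposal is correct and is essentially the paper's own argument: dilate by $1/|x|$ and observe that replacing the reference ball $B_1$ by the smaller $B_{|x|}$ (equivalently, after scaling, $B_{1/|x|}$ by $B_1$) leaves the numerator $|B(x,R)\cap B_r|$ untouched while only decreasing the denominator, so the averaging ratio can only increase. The case split on whether $B(x,R)\subset B_1$ is unnecessary, since the single chain
$\frac{|B(x,R)\cap B_r|}{|B(x,R)\cap B_1|}\le \frac{|B(x,R)\cap B_r|}{|B(x,R)\cap B_{|x|}|}=\frac{|B(x/|x|,\,R/|x|)\cap B_{r/|x|}|}{|B(x/|x|,\,R/|x|)\cap B_1|}$
holds uniformly in $R$.
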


\bigskip

\begin{proof}
For any $R>0$
\[
\frac{|B(x,R)\cap B_r|} {|B(x,R)\cap B_1|} \leq \frac{|B(x,R)\cap B_r|}{|B(x,R)\cap B_{|x|}|} = \frac{|B\left(\frac x{|x|},\frac R{|x|}\right)\cap B_{\frac r{|x|}}|}{|B\left(\frac x{|x|},\frac R{|x|}\right)\cap B_1|},
\]
and, as $R>0$ is arbitrary, Lemma \ref{frontera.bola.unidad} is proved.
\end{proof}

\bigskip

\begin{proof}[Proof of Theorem \ref{no.acotacion.hasta.l2}]
In view of Lemma \ref{frontera.bola.unidad}, we take a unit vector $x$. Lemma \ref{prop.estimacion.constante} asserts that
\begin{equation}\label{estimacion.constante.bola.unidad}
c_{\nu_n,p}\geq \frac{|B(x,R)\cap B_r|}{|B(x,R)\cap B_1|}\left(\frac1r\right)^\frac np,
\end{equation}
for $R,r>0$. With $r<1$ and $1-r<R<r$, we shall choose $r$ close to 1 and $R$ small.

\bigskip

We split the intersection of two balls into two solid spherical caps and conclude from (\ref{medida.casquete}) that
\begin{equation}
|B(x,R)\cap B_1|=|A_R(L)|+|A_1(L)| \leq \frac{\omega_{n-2}}{n^2-1}L^{n+1}\left(\frac1{\sqrt{R^2-L^2}}+\frac1{\sqrt{1-L^2}}\right),
\end{equation}
where $L=\sqrt{R^2-R^4/4}$.
In the same fashion
\begin{equation}
|B(x,R)\cap B_r|=|A_R(\ell)|+|A_r(\ell)| \geq \frac{\omega_{n-2}}{n^2-1}\ell^{n+1}\left(\frac1R+\frac1r\right),
\end{equation}
with $\ell=\sqrt{R^2-(R^2-r^2+1)^2/4}=\sqrt{r^2-(r^2-R^2+1)^2/4}$.
Putting these two estimates together, we obtain
\[
\frac{|B(x,R)\cap B_r|}{|B(x,R)\cap B_1|} \geq \beta \left(\frac \ell L\right)^{n+1},
\]
where $\beta=\beta(r,R)>0$ is independent of $n$.
We set
\[
\phi_r(R) = \left(\frac \ell L\right)^2 = \frac{4R^2-(R^2-r^2+1)^2}{4R^2-R^4}.
\]

Inequality (\ref{estimacion.constante.bola.unidad}) implies now
\[
c_{\nu_n,p} \geq \beta\,\phi_r(R)^{(n+1)/2}\,r^{-n/p}.
\]
It is enough to show that $r$ and $R$ can be chosen so that $\phi_r(R)^{1/2}\,r^{-1/p}>1$. This is equivalent to
\[
p<\frac{2\log r}{\log \phi_r(R)}.
\]
By setting $t=1-r^2$, we obtain
\[
\phi_r(R)=1-\frac{2R^2t+t^2}{4R^2-R^4},
\]
and with the choice $R=t^{1/4}$ one has
\[
\phi_r(t^{1/4})=1-\frac{2t-t^{3/2}}{4-t^{1/2}}=1-\frac t2 + o(t),
\]
as $t\longrightarrow 0$. Now Theorem \ref{no.acotacion.hasta.l2} is proved, because
\[
\lim_{r\rightarrow 1^-} \frac{2\log r}{\log \phi_r(t^{1/4})}= \lim_{t\rightarrow 0^+} \frac{\log (1-t)}{\log (1- t/2 + o(t))}=2.
\]

\end{proof}

\bigskip

We will obtain Proposition \ref{lp.caracteristicas.bolas} as a consequence of the following result, whose proof will appear at the end of this section.

\bigskip

\begin{prop}\label{cociente.bolas} Given $x\neq 0$ in $\real^n$, \ $0< r \leq 1$ and $R>0$, one has
\begin{equation}\label{cota.cociente.bolas}
\frac{|B(x,R)\cap B_r|}{|B(x,R)\cap B_1|}\leq 2 \left(\frac{|B_r|}{|B_{|x|}|}\right)^\frac12.
\end{equation}
Equivalently, for each $x\in \real^n$ and $0<r \leq 1$, one has
\[
M_{\nu_n}\chi_r(x)\leq 2 \left(\frac{r}{|x|}\right)^\frac n2.
\]
\end{prop}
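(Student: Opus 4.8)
The two displayed inequalities are equivalent: taking the supremum over $R$ of the left‑hand side of $(\ref{cota.cociente.bolas})$ gives $M_{\nu_n}\chi_r(x)$ (since $\nu_n(B(x,R))=|B(x,R)\cap B_1|$ and, as $r\le 1$, $\int_{B(x,R)}\chi_r\,d\nu_n=|B(x,R)\cap B_r|$), while $|B_r|/|B_{|x|}|=(r/|x|)^n$. So the plan is to prove the second form. The case $|x|\le r$ is trivial, since then $M_{\nu_n}\chi_r(x)\le 1\le (r/|x|)^{n/2}$; and for $r<|x|\le 1$, Lemma \ref{frontera.bola.unidad} reduces us to a unit vector, with $r$ replaced by $r/|x|<1$. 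Thus it suffices to show
\[
\frac{|B(x,R)\cap B_r|}{|B(x,R)\cap B_1|}\le 2r^{n/2}\qquad\text{whenever }|x|=1,\ 0<r<1,\ R>0.
\]
Moreover, whenever $2r^{n/2}\ge 1$ this is immediate from $B_r\subset B_1$, so one may also assume $r^{n/2}<1/2$.

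Fix a unit vector $x$. I would split into three ranges of $R$. If $R\le 1-r$ then $B(x,R)\cap B_r=\emptyset$ and there is nothing to prove. If $R\ge\sqrt2$, then every $y\in B_1$ with $\langle y,x\rangle\ge 0$ satisfies $|y-x|^2=|y|^2-2\langle y,x\rangle+1\le 2\le R^2$, so $B(x,R)\cap B_1$ contains the half of $B_1$ where $\langle y,x\rangle\ge 0$; hence $|B(x,R)\cap B_1|\ge|B_1|/2$, and since $|B(x,R)\cap B_r|\le|B_r|=r^n|B_1|\le r^{n/2}|B_1|\le 2r^{n/2}|B(x,R)\cap B_1|$ we are done. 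The remaining, main range is $1-r<R<\sqrt2$.

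There I would decompose both intersections into two solid spherical caps sharing the same disc of intersection, exactly as in the proof of Theorem \ref{no.acotacion.hasta.l2}: $B(x,R)\cap B_1=A_R(L)\cup A_1(L)$ with $L=\sqrt{R^2-R^4/4}$, and (for $1-r<R\le\sqrt{1+r^2}$, when $B(x,R)\cap B_r$ is a genuine lens) $B(x,R)\cap B_r=A_R(\ell)\cup A_r(\ell)$ with $\ell=\sqrt{R^2-(R^2-r^2+1)^2/4}$. Applying the upper bound of $(\ref{medida.casquete})$ to the caps of the numerator and the lower bound to the caps of the denominator, the common factor $\omega_{n-2}/(n^2-1)$ cancels and one gets
\[
\frac{|B(x,R)\cap B_r|}{|B(x,R)\cap B_1|}\le\Big(\frac{\ell}{L}\Big)^{n+1}\beta(r,R),\qquad \beta(r,R)=\frac{2R}{1+R}\Big(\frac{1}{R^2-r^2+1}+\frac{1}{r^2-R^2+1}\Big),
\]
where $\beta$ is independent of $n$ and $(\ell/L)^2$ is exactly the rational function $\phi_r(R)=\dfrac{4R^2-(R^2-r^2+1)^2}{4R^2-R^4}$ from that earlier proof. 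The algebraic heart of the argument is the inequality $\phi_r(R)\le r$, valid for all $R<2$: clearing denominators it becomes $\psi(R^2)\ge 0$ with $\psi(u)=u^2-2(1-r)u+(1+r)^2(1-r)$, and since $\psi$ is an upward‑opening parabola whose minimum value is $\psi(1-r)=r(1-r)(3+r)>0$, this holds. Hence in the main range the ratio is at most $r^{(n+1)/2}\beta(r,R)$.

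It then remains to pass from $r^{(n+1)/2}\beta(r,R)$ to $2r^{n/2}$ uniformly in $n$, which I expect to be the main obstacle: the estimate above is wasteful precisely where one of the caps is nearly a half‑ball, since the factor $(\rho^2-L^2)^{-1/2}$ in the upper bound of $(\ref{medida.casquete})$ then blows up. For $R$ bounded away from $1-r$ and $\sqrt{1+r^2}$ the factor $\beta$ is controlled, and since $\phi_r(R)$ is in fact strictly below $r$ the surplus $(\phi_r(R)/r)^{n/2}$ absorbs it once $n$ is large; for the remaining sub‑ranges of $R$ I would instead bound the numerator crudely by $|B_r|=r^n|B_1|$ and the denominator below by $|A_1(L)|$ (the volume of the largest solid spherical cap of $B_1$ contained in $B(x,R)$, namely the one cut at height $R^2/2$), reducing the claim to a one‑variable estimate for cap volumes; and the cases with $2r^{n/2}\ge 1$ are covered by the trivial bound noted above. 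The delicate point throughout is that several of these cap estimates are only comfortably true for large $n$, so the bookkeeping that glues together the large‑$n$ estimate, the trivial bound, the half‑space bound and the crude bound must be carried out carefully near the critical radius $R=(1-r^2)^{1/4}$.
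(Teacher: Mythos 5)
Your reduction to $|x|=1$, the treatment of the trivial ranges, and the algebraic core $\ell^2\le rL^2$ (your $\psi(u)\ge0$ computation is the same fact the paper proves) are all fine, but the main range $1-r<R<\sqrt{1+r^2}$ is not actually closed, and this is where the real content of the proposition lies. Applying the upper bound of (\ref{medida.casquete}) to the numerator caps and the lower bound to the denominator caps costs you the factor $\beta(r,R)$, which blows up as $R^2\to 1+r^2$ (the cap of $B_r$ approaches a half-ball) and in any case exceeds $2r^{-1/2}$ for many $(r,R)$, so $r^{(n+1)/2}\beta(r,R)\le 2r^{n/2}$ simply fails pointwise. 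The repairs you sketch do not obviously glue: the absorption $(\phi_r(R)/r)^{n/2}\to0$ is not uniform, since $r-\phi_r(R)\gtrsim r(1-r)^2$ degenerates as $r\to1$; the trivial bound $2r^{n/2}\ge1$ only covers $1-r\lesssim 1/n$; and your crude bound $|B_r|\le 2r^{n/2}|A_1(L)|$ near $R^2=1+r^2$ needs $(L^2/r)^{n/2}$ to beat a $\sqrt n$ loss, where $L^2-r\approx 1-r$, which requires $n(1-r)\gtrsim\log n$. That leaves an uncovered regime ($1-r$ between roughly $1/n$ and $\log n/n$, $R$ near $\sqrt{1+r^2}$) where none of your four mechanisms applies, so the "bookkeeping" you defer is not mere bookkeeping — a new idea is needed there.

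The paper avoids all asymptotics by replacing the two-sided volume estimates with exact geometric comparisons. For $R\le r$ it dilates $A_R(L)$ by the factor $\ell/L$, obtaining $A_{\frac{\ell}{L}R}(\ell)$, and uses that a cap of fixed half-width $\ell$ cut from a smaller sphere has larger volume, giving $|A_R(\ell)|\le(\ell/L)^n|A_R(L)|$ with no error factor. For $r<R\le\sqrt{1+r^2}$ it dilates $A_r(\ell)$ by $r^{-1/2}$ and shows that a congruent copy of $A_{r^{1/2}}(\ell/r^{1/2})$ sits inside $B(x,R)\cap B_1$, by comparing cap heights; this containment reduces, after squaring, to $-r(1-r)^2\le0$. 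Those two exact steps are what produce the clean constant $2$ uniformly in $n$, $r$ and $R$, and they are absent from your argument. I would suggest replacing your estimate in the main range by a dilation-plus-containment argument of this kind rather than trying to force the $(\ref{medida.casquete})$-based bound to close.
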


\bigskip

\begin{xrem}
It may be the case that the best constant in (\ref{cota.cociente.bolas}) is 1 rather than 2, but we have made not effort to compute its exact value.
\end{xrem}

\bigskip

\begin{proof}[Proof of Proposition \ref{lp.caracteristicas.bolas}]
The result is trivial when $r\geq 1$. We just have to show that given $0<r<1$, for each $x\in \real^n$,
\[
M_{\nu_n}\chi_r(x)\ \nu_n\!\left(\{y\in\real^n:M_{\nu_n}\chi_r(y) > M_{\nu_n}\chi_r(x) \}\right)^{1/p} \leq 2^{1/p} |B_r|^{1/p}.
\]
Since $M_{\nu_n}\chi_r(x)\leq 1$, by Proposition \ref{cociente.bolas}
\[
M_{\nu_n}\chi_r(x) \leq M_{\nu_n}\chi_r(x)^{2/p} \leq 2^{2/p}\left(\frac r{|x|}\right)^{n/p}.
\]
In view of Lemma \ref{maximal.es.decreciente}
\[
\{y\in\real^n:M_{\nu_n}\chi_r(y) > M_{\nu_n}\chi_r(x) \} \subset B_{|x|}.
\]
Hence,
\begin{eqnarray*}
M_{\nu_n}\chi_r(x)\ \nu_n\!\left(\{y\in\real^n:M_{\nu_n}\chi_r(y) > M_{\nu_n}\chi_r(x) \}\right)^{1/p} &\leq& 2^{2/p}\left(\frac r{|x|}\right)^{n/p}|B_{|x|}|^{1/p}\\&=& 2^{2/p} |B_r|^{1/p}.
\end{eqnarray*}
\end{proof}

\bigskip

\begin{proof} [Proof of Theorem \ref{debil.restringida.lp}]
By a density argument it is enough to prove the result for a simple function of the form $g=\sum_{i=1}^N c_i \chi_{B_i}$, where $B_1\supset\cdots \supset B_i \supset\cdots \supset B_N$ are balls centred at the origin and $c_i,\ i=1,\ldots,N$ are positive real numbers. Since $M_{\nu_n}$ is a sublinear operator, we have for such a function $g$
\begin{eqnarray*}
\|M_{\nu_n}g\|_{L^{p,\infty}(\nu_n)}^\ast &\leq& \|M_{\nu_n}g\|_{L^{p,\infty}(\nu_n)} \leq \sum_{i=1}^N c_i \|M_{\nu_n}\chi_{B_i}\|_{L^{p,\infty}(\nu_n)}\\ &\leq& \frac p{p-1} \sum_{i=1}^N c_i \|M_{\nu_n}\chi_{B_i}\|_{L^{p,\infty}(\nu_n)}^\ast.
\end{eqnarray*}
By Proposition \ref{lp.caracteristicas.bolas}
\begin{eqnarray*}
\sum_{i=1}^N c_i \|M_{\nu_n}\chi_{B_i}\|_{L^{p,\infty}(\nu_n)}^\ast &\leq& 2^{2/p} \sum_{i=1}^N c_i \,\|\chi_{B_i}\|_{L^{p}(\nu_n)} =
2^{2/p}\sum_{i=1}^N c_i \,\nu_n(B_i)^{1/p} \\&=& 2^{2/p}\|g\|_{L^{p,1}(\nu_n)}^\ast.
\end{eqnarray*}
\end{proof}

\bigskip

\begin{proof}[Proof of Proposition \ref{operador.hardy}]
By homogeneity we can assume that the radially decreasing  function $g$ satisfies $\mathcal A g^p\,(x) = 1$. Given $t>0$, the level set $\{y:g(y)>t\}$ is the ball $B_{r(t)}$ for a certain $r(t) > 0$. Thus
\begin{eqnarray*}
M_{\nu_n}g(x)&=&\sup_{R>0}\frac1{|B(x,R)\cap B_1|}\int_{B(x,R)\cap B_1}g(y)\,dy\\&=&\frac1{|B(x,R)\cap B_1|} \int_0^\infty|\{y\in B(x,R)\cap B_1:g(y)>t\}|\,dt\\&=& \frac1{|B(x,R)\cap B_1|} \int_0^\infty|B(x,R)\cap B_{r(t)}\cap B_1|\,dt\\&=& \int_0^1\frac{|B(x,R)\cap B_{r(t)}\cap B_1|}{|B(x,R)\cap B_1|}\,dt+ \int_1^\infty \frac{|B(x,R)\cap B_{r(t)}\cap B_1|}{|B(x,R)\cap B_1|}\,dt.
\end{eqnarray*}
The first term on the last line is clearly bounded by 1. For the second one, we can use Proposition \ref{cociente.bolas} to get
$$
M_{\nu_n}g(x)\leq\ 1+ 2\int_1^\infty\frac{|B_{r(t)}|^{1/2}}{|B_{|x|}|^{1/2}}\,dt \leq 1+\frac{2}{|B_{|x|}|^{1/2}} \int_{1}^\infty|\{y:g(y)>t\}|^{1/2}\,dt.
$$
The hypothesis $\mathcal A g^p(x)=1$ implies $g(x)\leq 1$, so for $g(y)>t>1$ it is necessary that $y\in B_{|x|}$. Then, by the Tchebychev inequality, the above expression is less than or equal to
\begin{eqnarray*}
1+\frac{2}{|B_{|x|}|^{1/2}} \int_1^\infty\frac1{t^{p/2}}\left(\int_{B_{|x|}}g(y)^p\,dy\right)^{1/2}\,dt &=& 1+ \frac{4}{p-2}\left(\frac1{|B_{|x|}|}\int_{B_{|x|}}g(y)^p\,dy\right)^{1/2} \\&=&1+\frac{4}{p-2}.
\end{eqnarray*}
\end{proof}

\bigskip

\begin{proof}[Proof of Proposition \ref{acotacion.operador.hardy}]
If $g$ is radially decreasing, so is $(\mathcal Ag^p)^{1/p}$, and its level sets are balls centred at the origin. So given $\lambda>0$
\[
\{y:(\mathcal Ag^p)^{1/p}(y)>\lambda\}=B_{r(\lambda)},
\]
for some $r(\lambda)>0$. Hence
\[
\left(\frac1{|B_{r(\lambda)}|}\int_{B_{r(\lambda)}}g(y)^p\,dy\right)^{1/p} \geq \lambda,
\]
which we can rearrange as
\[
\left|\{y:(\mathcal Ag^p)^{1/p}(y)>\lambda\}\right|=\left|B_{r(\lambda)}\right|\leq \frac1{\lambda^p}\int_{B_{r(\lambda)}}g(y)^p\,dy.
\]
\end{proof}

\bigskip

\begin{proof}[Proof of Proposition \ref{cociente.bolas}]
It is enough to prove the result in the case when $|x|=1$, because then by Lemma \ref{frontera.bola.unidad}
\[
M_{\nu_n}\chi_r(x)\leq M_{\nu_n}\chi_{\frac r{|x|}}(x/|x|)\leq 2 \left(\frac r{|x|}\right)^{n/2}.
\]

\bigskip

So, assuming that $|x|=1$, we want to prove that for each $R>0$ and $0<r\leq 1$
\begin{equation}\label{goal}
\frac{|B(x,R)\cap B_r|}{|B(x,R)\cap B_1|}\leq 2 r^{n/2}.
\end{equation}
The case where $R\,\leq\, 1-r$ is trivial since then $|B(x,R)\cap B_r|=0$, so from now on we assume $R>1-r$. Using the same notation as in the proof of Theorem \ref{no.acotacion.hasta.l2}, we have that
\begin{eqnarray}
|B(x,R)\cap B_r|&=&|A_R(\ell)|+|A_r(\ell)|,\label{caps1}\\
|B(x,R)\cap B_1|&=&|A_R(L)|+|A_1(L)|.\label{caps2}
\end{eqnarray}
We first prove the inequality $\ell/L\leq r^{1/2}$. The equivalent statement $\ell^2\leq L^2r$ can be rewritten as $-R^4+2R^2(1-r)-(1-r)(1+r)^2\leq 0.$
This is a second-degree polynomial in $t=R^2$, whose maximal value, assumed at $t=1-r$, is $(1-r)^2-(1-r)(1+r)^2\leq 0$ for each $r$ in $[0,1]$.
\bigskip

We divide the proof of (\ref{goal}) into three cases:\\
\noindent\textsc{Case 1:} $R\leq r$. Using (\ref{caps1}) and (\ref{caps2}) we have that
\begin{eqnarray*}
|B(x,R)\cap B_r|&\leq& 2|A_R(\ell)|,\\
|B(x,R)\cap B_1| &\geq& |A_R(L)|.
\end{eqnarray*}
Dilating by the factor $\ell/L<1$, we get $\frac \ell LA_R(L) = A_{\frac \ell LR}(\ell)$. This implies that $|A_R(\ell)|\leq |A_{\frac \ell LR}(\ell)|= (\ell/L)^n |A_R(L)|$. So we have
\[
\frac{|B(x,R)\cap B_r|}{|B(x,R)\cap B_1|} \leq \frac{2|A_R(\ell)|}{|A_R(L)|} \leq 2\left(\frac \ell L\right)^n \leq 2r^{n/2}.
\]

\medskip

\noindent\textsc{Case 2:} $r<R\leq\sqrt{1+r^2}$. In this situation (\ref{caps1}) implies
\[
|B(x,R)\cap B_r|\leq 2|A_r(\ell)|.
\]
Here we dilate $A_r(\ell)$ by the factor $r^{-1/2}$ instead: $r^{-1/2} A_r(\ell)= A_{r^{1/2}}(\ell/r^{1/2})$. We claim that a set congruent with $A_{r^{1/2}}(\ell/r^{1/2})$ is contained in $B(x,R)\cap B_1$. This would give the bound $|A_{r^{1/2}}(\ell/r^{1/2})| \leq |B(x,R)\cap B_1|$, and as a consequence
\begin{equation}\label{caso.2}
\frac{|B(x,R)\cap B_r|}{|B(x,R)\cap B_1|} \leq \frac{2\,|A_r(\ell)|}{|B(x,R)\cap B_1|} \leq 2\,\frac{r^{n/2}\,|A_{r^{1/2}}(\ell/r^{1/2})|}{|B(x,R)\cap B_1|} \leq 2\,r^{n/2}.
\end{equation}
Thus we only have to justify the claim. For this we regard $B(x,R)\cap B_1$ as the union of two solid spherical caps $\tilde A_1(L)$ and $\tilde A_R(L)$ congruent with $A_1(L)$ and $A_R(L)$, respectively. Consider the unique hyperplane parallel to the planar boundary of $\tilde A_R(L)$ whose intersection with $\tilde A_R(L)$ is a circular disc $D$ of radius $\ell/r^{1/2}$. This disc divides $\tilde A_R(L)$ into two sets. One of them, $\tilde A_R(\ell/r^{1/2})$, is congruent with $A_R(\ell/r^{1/2})$. Call $\tilde A_{r^{1/2}}(\ell/r^{1/2})$ the cap congruent with $A_{r^{1/2}}(\ell/r^{1/2})$ such that $\tilde A_R(\ell/r^{1/2}) \cap \tilde A_{r^{1/2}}(\ell/r^{1/2})=D$. To see that $\tilde A_{r^{1/2}}(\ell/r^{1/2})$ is contained in $\tilde A_1(L)\cup \tilde A_R(L)$, and thus in $B(x,R)\cap B_1$, it is enough to compare the heights of four caps and verify that
\[
h(r^{1/2},\ell/r^{1/2})\leq h(1,L)+h(R,L)-h(R,\ell/r^{1/2}).
\]
In view of the definition (\ref{height.of.cap}) of $h$, it is not difficult to see that $h(1,L)+h(R,L)=R$, and the above inequality becomes
\[
r^{1/2}-\sqrt{r-\ell^2/r}\leq \sqrt{R^2-\ell^2/r}.
\]
We can multiply by $r^{1/2}$ on both sides and use that $\ell^2=r^2-((r^2-R^2+1)/2)^2$ to get the equivalent statement
\[
r-\frac{r^2-R^2+1}2\leq \sqrt{R^2r-r^2+\left(\frac{r^2-R^2+1}2\right)^2}.
\]
Here the left-hand side is positive since $R>1-r$, and one obtains by squaring the equivalent inequality
\[
-r(1-r)^2\leq0,
\]
which holds since $0<r \leq 1$. The claim follows.

\bigskip

\noindent\textsc{Case 3:} $R>\sqrt{1+r^2}$.
In this case, the ball $B(x,R)$ contains more than half of the ball $B_r$. We have
\[
\frac{|B(x,R)\cap B_r|}{|B(x,R)\cap B_1|}\leq \frac{|B_r|}{|B(x,\sqrt{1+r^2})\cap B_1|}= \frac{2|A_r(r)|}{|B(x,\sqrt{1+r^2})\cap B_1|},
\]
and now we can use (\ref{caso.2}) in the special case where $R=\sqrt{1+r^2}$ and $\ell=r$ to get
\[
\frac{2|A_r(r)|}{|B(x,\sqrt{1+r^2})\cap B_1|}\leq 2\,r^{n/2}.
\]

\bigskip

\end{proof}

\bigskip

\section{The Gaussian measure}\label{gaussian.measure}

\bigskip

In this section we will state some properties of the Gaussian measure, and we will give the proofs of Theorems \ref{teorema.gaussiana} and \ref{teorema.exponencial}.

\bigskip

The idea of the proof of Theorem \ref{teorema.gaussiana} is the following. From Lemma \ref{prop.estimacion.constante} we know that for any $x_n\in\real^n$ and $r_n>0$
\begin{equation}\label{constante.gaussiana.bolas}
c_{\gamma_n,p} \geq M_{\gamma_n}\chi_{r_n}(x_n)
\left(\frac{\gamma_n(B_{|x_n|})}{\gamma_n(B_{r_n})}\right)^\frac1p.
\end{equation}
Since $M_{\gamma_n}\chi_{r_n}(x_n)\geq \gamma_n(B(x_n,R_n)\cap B_{r_n})/\gamma_n(B(x_n,R_n))$ for each $R_n>0$, we only need to prove the following lemma.

\bigskip

\begin{lema}\label{prueba.gaussiana}
There exist sequences $\{x_n\}, \{r_n\}$ and $\{R_n\}$ with $x_n\in \real^n$ and $r_n, R_n>0$ for $n\in\mathbb N$, such that
\begin{equation}\label{estimacion.maximal.gaussiana}
\ \frac{\gamma_n(B(x_n,R_n)\cap B_{r_n})}{\gamma_n(B(x_n,R_n))}\geq \frac c{\sqrt{n}},
\end{equation}
\begin{equation}\label{cociente.bolas.centradas.medida.gaussiana}
\frac{\gamma_n(B_{|x_n|})}{\gamma_n(B_{r_n})}\geq c\,a^n,
\end{equation}
for some absolute constants $a>1$ and $c>0$.
\end{lema}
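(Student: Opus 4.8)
The goal is to produce, for each $n$, a center $x_n$, a source radius $r_n$, and a ball radius $R_n$ so that simultaneously the Gaussian mass of $B(x_n,R_n)$ is spread evenly enough across $B_{r_n}$ (giving the polynomially small lower bound \eqref{estimacion.maximal.gaussiana}) while the ratio $\gamma_n(B_{|x_n|})/\gamma_n(B_{r_n})$ blows up geometrically \eqref{cociente.bolas.centradas.medida.gaussiana}. The driving heuristic is the well-known concentration of the Gaussian measure $\gamma_n$: almost all of its mass sits in the thin annulus $|y|\approx \sqrt{n/(2\pi)}$, and the radial density $\e^{-\pi s^2}\,\omega_{n-1}s^{n-1}$ is sharply peaked there. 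So I would take $r_n$ to be a fixed fraction of that critical radius, say $r_n = \rho\sqrt{n}$ for a small constant $\rho$ to be chosen, and $|x_n|$ slightly larger, $|x_n| = \sigma\sqrt{n}$ with $\rho < \sigma$; then place $B(x_n,R_n)$ as a ball whose intersection with $B_{r_n}$ is a spherical cap of moderate relative Gaussian weight. The free parameters $\rho, \sigma$ and the shape parameter controlling $R_n$ must be tuned so that the exponential gain in \eqref{cociente.bolas.centradas.medida.gaussiana} survives.

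For \eqref{cociente.bolas.centradas.medida.gaussiana}, I would estimate $\gamma_n(B_\tau)$ by Laplace's method on the one-dimensional integral $\int_0^\tau \e^{-\pi s^2}\omega_{n-1}s^{n-1}\,ds$. When $\tau = c\sqrt{n}$ with $c^2 < 1/(2\pi)$ (so the peak of the integrand lies outside $B_\tau$ and the integral is dominated by its endpoint), one gets $\gamma_n(B_\tau)$ comparable, up to polynomial factors, to $\e^{-\pi \tau^2}\tau^{n}$, i.e.\ to $\bigl(\e^{-\pi c^2}c\,\sqrt n\bigr)^n$ times lower-order terms. Hence the ratio in \eqref{cociente.bolas.centradas.medida.gaussiana} behaves like $\bigl(g(\sigma)/g(\rho)\bigr)^n$ up to polynomial factors, where $g(c) = c\,\e^{-\pi c^2}$. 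Since $g$ is strictly increasing on $(0, 1/\sqrt{2\pi})$, choosing $0<\rho<\sigma<1/\sqrt{2\pi}$ makes $g(\sigma)/g(\rho) = a > 1$, which is exactly \eqref{cociente.bolas.centradas.medida.gaussiana}. One has to be slightly careful that the polynomial prefactors from the two Laplace estimates do not cancel the exponential gain; they are at worst of the form $n^{C}$ and can be absorbed by shrinking $a$ marginally, at the cost of the constant $c$.

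For \eqref{estimacion.maximal.gaussiana}, the point is to choose $R_n$ so that $B(x_n,R_n)$ meets $B_{r_n}$ in a cap that is neither negligible nor almost all of $B(x_n,R_n)$, and to show the Gaussian weights of the relevant caps stay within a factor $\sqrt n$ of each other. Since $|x_n|$ and $r_n$ are both of order $\sqrt n$ and we will take $R_n$ also of order $\sqrt n$, all the caps involved have diameter of order $\sqrt n$; on such a cap the Gaussian density $\e^{-\pi|y|^2}$ varies by a factor that is exponential in $n$ in the worst case, so the naive bound is not good enough. The remedy is to localize: replace $B(x_n,R_n)$ by its intersection with a thin spherical shell $\{|y|\in[\alpha\sqrt n,\beta\sqrt n]\}$ carrying all but an exponentially small fraction of $\gamma_n(B(x_n,R_n))$, and similarly for $B(x_n,R_n)\cap B_{r_n}$; on such a shell the density is essentially constant (up to a factor $\e^{O(\sqrt n)}$, even this needs care), so the ratio of Gaussian weights is comparable to the ratio of Lebesgue measures, and the latter is controlled by a cap computation as in Section \ref{section.notations}. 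The Lebesgue ratio of two caps of comparable height inside a ball of radius $\sim\sqrt n$ is, by \eqref{medida.casquete}, bounded below by a constant over a polynomial in $n$ — in fact the loss comes precisely from the $(n^2-1)^{-1}$ and the $\rho^{-1}$ vs.\ $(\rho^2-L^2)^{-1/2}$ discrepancies, which together give something like $1/\sqrt n$.

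\textbf{Main obstacle.} The delicate point is the interplay of scales in \eqref{estimacion.maximal.gaussiana}: the Gaussian density is not slowly varying on the scale $R_n\sim\sqrt n$, so one cannot simply compare $\gamma_n$-weights of caps with Lebesgue weights. Making the shell-localization rigorous — choosing $\alpha,\beta$ and $R_n$ so that (i) the shell captures essentially all the mass of both $B(x_n,R_n)$ and $B(x_n,R_n)\cap B_{r_n}$, (ii) the density oscillation across the shell costs only a polynomial (or at least a harmless exponential) factor, and (iii) the resulting cap geometry still gives the $1/\sqrt n$ lower bound — is the technical heart of the argument, and it is where the exponent of $n$ in the denominator of \eqref{estimacion.maximal.gaussiana} gets pinned down. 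The exponential estimate \eqref{cociente.bolas.centradas.medida.gaussiana} itself is comparatively routine Laplace asymptotics once the radii are chosen with $\rho<\sigma<1/\sqrt{2\pi}$.
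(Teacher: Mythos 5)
Your treatment of \eqref{cociente.bolas.centradas.medida.gaussiana} is correct and essentially identical to the paper's: the paper takes $|x_n|=T_n=\sqrt{(n-1)/(2\pi)}$ and $r_n=rT_n$ with $r<1$, and the two-sided bound $\e^{-\pi\rho^2}|B_\rho|\le\gamma_n(B_\rho)\le n\,\e^{-\pi\rho^2}|B_\rho|$ for $\rho\le T_n$ gives exactly your ratio $(g(\sigma)/g(\rho))^n$ up to a factor $1/n$. That half is fine.

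The gap is in \eqref{estimacion.maximal.gaussiana}, and you have in fact located it yourself without closing it. Your shell-localization plan does not work as described: the radial profile of $\gamma_n$ restricted to $B(x_n,R_n)$, namely $\rho\mapsto|\partial B_\rho\cap B(x_n,R_n)|_{n-1}\e^{-\pi\rho^2}$, concentrates on a shell of width $O(1)$ in absolute units, and across such a shell the density $\e^{-\pi\rho^2}$ oscillates by $\e^{O(\sqrt n)}$ — which, as you note, is fatal for a $c/\sqrt n$ bound; shrinking the shell to width $O(1/\sqrt n)$ to tame the oscillation then forces you to track exactly where the mass sits, which is the computation you have not done. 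The paper's resolution is to never separate ``density'' from ``Lebesgue measure'': writing $\rho=T_ns$, $t=s^2$, the mass of $B(x_n,R_n)$ (with $R_n=RT_n$, $1-r<R<1$) is an integral of $F(t)^{(n-1)/2}$ with $F(t)=\bigl(t-\bigl(\tfrac{1+t-R^2}{2}\bigr)^2\bigr)\e^{-t}$, whose unique maximum is at $t_0=2+R^2-\sqrt{1+4R^2}$. The two facts that make the lemma work are (i) $t_0<1$, so one may choose $r$ with $r^2=t_0$ and still $r<1$, keeping the exponential gain in \eqref{cociente.bolas.centradas.medida.gaussiana}; and (ii) a Laplace estimate around $t_0$ over an interval of length $\delta\sim 1/\sqrt n$ shows $B_{r_n}$ captures at least a $c/\sqrt n$ fraction of $\gamma_n(B(x_n,R_n))$. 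Your proposal fixes $r_n=\rho\sqrt n$ \emph{before} choosing $R_n$ and never imposes this compatibility; for a generic choice with $r^2<t_0$ the ball $B_{r_n}$ misses the concentration radius of $B(x_n,R_n)$ entirely and the ratio in \eqref{estimacion.maximal.gaussiana} is exponentially small, not polynomially small. The identification of $t_0$, the verification that $t_0<1$, and the matching of $r$ to $t_0$ constitute the missing core of the argument.
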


\bigskip

Proving (\ref{cociente.bolas.centradas.medida.gaussiana}) will involve dealing with the Gaussian measure of balls centred at the origin. The measure of $B_R$ is
$$
\gamma_n(B_R)=\omega_{n-1}\int_0^R \e^{-\pi s^2}s^{n-1}\,ds.
$$
The function $G(s):=\e^{-\pi s^2}s^{n-1}$ is increasing in the interval $(0,T_n)$ and decreasing in $(T_n,\infty)$, where $T_n:=\sqrt{\frac{n-1}{2\pi}}$. So $G$ attains its maximum at the point $s=T_n$. An essential part of the mass of $\gamma_n$ is concentrated around the sphere of radius $T_n$.
\begin{lema}\label{estimaciones.bolas.centradas.medida.gaussiana}
If $\rho<T_n$ the following estimates hold
\[\e^{-\pi \rho^2}|B_\rho| \leq \gamma_n(B_\rho) \leq n \e^{-\pi \rho^2}|B_\rho|.\]
\end{lema}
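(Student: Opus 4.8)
The plan is to reduce both quantities to one-dimensional integrals in the radial variable and then exploit the monotonicity of $G$ recorded just above the statement. In polar coordinates,
\[
\gamma_n(B_\rho)=\omega_{n-1}\int_0^\rho G(s)\,ds,\qquad |B_\rho|=\omega_{n-1}\int_0^\rho s^{n-1}\,ds=\frac{\omega_{n-1}}{n}\,\rho^n,
\]
where $G(s)=\e^{-\pi s^2}s^{n-1}$. Thus the asserted double inequality is equivalent to
\[
\e^{-\pi\rho^2}\,\frac{\rho^n}{n}\;\le\;\int_0^\rho G(s)\,ds\;\le\;\e^{-\pi\rho^2}\,\rho^n .
\]

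For the lower bound I would only use that $s\mapsto\e^{-\pi s^2}$ is decreasing, so $\e^{-\pi s^2}\ge\e^{-\pi\rho^2}$ for $0<s<\rho$; pulling this constant out of $\int_0^\rho s^{n-1}\,ds$ gives $\int_0^\rho G(s)\,ds\ge\e^{-\pi\rho^2}\rho^n/n$, that is, $\gamma_n(B_\rho)\ge\e^{-\pi\rho^2}|B_\rho|$. Nothing beyond $s<\rho$ is needed here.

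For the upper bound, the point is that the hypothesis $\rho<T_n$ places the entire interval of integration inside the region where $G$ is increasing. Hence $G(s)\le G(\rho)=\e^{-\pi\rho^2}\rho^{n-1}$ for every $s\in(0,\rho)$, and integrating this constant bound over $(0,\rho)$ gives $\int_0^\rho G(s)\,ds\le\rho\,G(\rho)=\e^{-\pi\rho^2}\rho^n$. Multiplying by $\omega_{n-1}$ and using $\omega_{n-1}\rho^n=n|B_\rho|$ yields $\gamma_n(B_\rho)\le n\,\e^{-\pi\rho^2}|B_\rho|$, as desired.

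There is no genuine obstacle here; the only thing worth isolating is that the condition $\rho<T_n$ is exactly what makes the monotonicity of $G$ available on all of $(0,\rho)$, so that the crude estimate $G\le G(\rho)$ costs only the factor $n$, which comes from comparing $\int_0^\rho s^{n-1}\,ds=\rho^n/n$ with the constant bound $\rho^n$. One could presumably improve the constant, but the factor $n$ is all that is needed to verify (\ref{cociente.bolas.centradas.medida.gaussiana}) in Lemma \ref{prueba.gaussiana}.
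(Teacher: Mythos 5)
Your proof is correct: the lower bound follows from $\e^{-\pi s^2}\geq\e^{-\pi\rho^2}$ on $(0,\rho)$, and the upper bound from the monotonicity of $G$ on $(0,T_n)$ together with $\omega_{n-1}\rho^n=n|B_\rho|$, which is exactly where the hypothesis $\rho<T_n$ and the factor $n$ enter. The paper omits the argument (deferring to \cite{C}), and what you have written is precisely the standard elementary verification intended there.
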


\bigskip

The proofs of these facts are easy (see \cite{C} for details).

\bigskip

\begin{proof}[Proof of Lemma \ref{prueba.gaussiana}]
We start with statement (\ref{cociente.bolas.centradas.medida.gaussiana}). Let us take $x_n=T_n x_n'$ with $x_n'$ a unit vector in $\real^n$. In order to make the quotient $|x_n|/r_n$ independent of $n$, set $r_n=rT_n$, with $0<r<1$. Lemma \ref{estimaciones.bolas.centradas.medida.gaussiana} implies that
$$
\frac{\gamma_n(B_{|x_n|})}{\gamma_n(B_{r_n})} \geq \frac{\e^{-\pi T_n^2}|B_{ T_n}|}{n\,{\e^{-\pi r^2 T_n^2}}|B_{r T_n}|} = \frac{\e^\frac{1-r^2}2}n\left(\e^\frac{r^2-1}2\frac1r\right)^n.
$$
To see that the quantity in the last parenthesis is greater than 1, just apply the inequality $e^x > 1+x$ with $x=r^2-1$. This proves (\ref{cociente.bolas.centradas.medida.gaussiana}).

\bigskip

We now turn to the proof of (\ref{estimacion.maximal.gaussiana}). Take $R_n=RT_n$, with $1-r<R<1$. To calculate $\gamma_n(B(x_n,R_n))$, we will integrate over spherical caps where the density $\e^{-\pi|x|^2}$ is constant. We get
\begin{eqnarray*}
\gamma_n(B(x_n,R_n))&=&\int_{T_n-R_n}^{T_n+R_n}\Bigl|\partial B_\rho\cap B(x_n,R_n)\Bigr|_{n-1} \e^{-\pi\rho^2}\,d\rho \\&=& T_n^n\int_{1-R}^{1+R}\Bigl|\partial B_s\cap B(x_n', R)\Bigr|_{n-1} \e^{-\frac{n-1}2s^2}\,ds,
\end{eqnarray*}
where $|\cdot|_{n-1}$ denotes ($n-1$)-dimensional Hausdorff measure, and the second equality is justified by the change of variables $\rho=T_ns$. Call $\beta_s$ the angle determined by the segment that joins the origin with $x_n'$ and the one that connects the origin with any point. Then $\beta_s<\pi/2$ since $R<1$. We compute the surface measure of the spherical caps in the following way
\begin{equation}\label{surface.cap}
\Bigl|\partial B_s\cap B(x_n', R)\Bigr|_{n-1}=\int_0^{\beta_s}\omega_{n-2}(s\sin\theta)^{n-2}s\,d\theta.
\end{equation}
For the last integral we have
\[
\int_0^{\beta_s} \sin^{n-2}\theta\,d\theta \leq \int_0^{\beta_s} \frac{\cos \theta}{\cos \beta_s} \sin^{n-2}\theta\,d\theta = \frac1{\cos \beta_s}\frac{\sin^{n-1}\beta_s}{n-1}.
\]
and
\begin{equation}\label{integral.seno}
\int_0^{\beta_s} \sin^{n-2}\theta\,d\theta \geq \int_0^{\beta_s} \cos\theta \sin^{n-2}\theta\,d\theta =\frac{\sin^{n-1}\beta_s}{n-1},
\end{equation}
We start with the upper bound for $\gamma_n(B(x_n,R_n))$. Calling $F(s^2)=\sin^2 \beta_s s^2 \e^{-s^2}$, we have
$$
 \gamma_n(B(x_n,R_n))\leq \frac{\omega_{n-2}}{n-1} T_n^n\int_{1-R}^{1+R} \frac1{\cos\beta_s} F(s^2)^\frac{n-1}2\,ds.
$$

\bigskip

By the cosine law applied to the triangle whose vertices are given by the origin, $x_n'$ and any $y\in\partial B_s \cap \partial B(x_n',R)$, one obtains
\[
\cos\beta_s=\frac{1+s^2-R^2}{2s},
\]
and consequently
\[
\sin \beta_s=\left(1-\left(\frac{1+s^2-R^2}{2s}\right)^2\right)^\frac12.
\]
The maximal value of $\beta_s$ occurs when $\partial B_s$ and $\partial B(x_n',R)$ meet perpendicularly, and then $\sin \beta_s=R$. Thus one always has $\cos\beta_s\geq \sqrt{1-R^2}$. Taking all this into account, we get
\[
 \gamma_n(B(x_n,R_n)) \leq \frac{\omega_{n-2}\,T_n^n}{(n-1)\sqrt{1-R^2}} \int_{(1-R)^2}^{(1+R)^2} F(t)^\frac{n-1}2\,\frac{dt}{2\sqrt t},
\]
where $F(t)=\left(t-\left(\frac{1+t-R^2}2\right)^2\right)\e^{-t}$.

\bigskip

It is easy to check that $F((1-R)^2)=F((1+R)^2)=0$ and that $F$ is increasing in the interval $((1-R)^2,t_0)$ and decreasing in $(t_0,(1+R)^2)$, where $t_0=2+R^2-\sqrt{1+4R^2}$ is the maximum point. So we can estimate
$$
\gamma_n(B(x_n,R_n))\leq \frac{\omega_{n-2}\,T_n^n}{(n-1)\sqrt{1-R^2}} \int_{(1-R)^2}^{(1+R)^2}F(t_0)^\frac{n-1}2\,\frac{dt}{2\sqrt t}= \frac{2\,\omega_{n-2}\,T_n^n\,R}{(n-1)\sqrt{1-R^2}} F(t_0)^\frac{n-1}2.
$$

\bigskip

Next, we obtain a lower bound for $\gamma(B_{r_n}\cap B(x_n,R_n))$. As above we have
\[
\gamma(B_{r_n}\cap B(x_n,R_n)) = \int_{T_n-R_n}^{r_n}\Bigl|(\partial B_\rho\cap B(x_n,R_n)\Bigr|_{n-1} \e^{-\pi\rho^2}\,d\rho,
\]
and by (\ref{surface.cap}) and (\ref{integral.seno})
\[
\gamma(B_{r_n}\cap B(x_n,R_n))\geq \frac{\omega_{n-2}}{n-1} T_n^n\int_{1-R}^{r} F(s^2)^\frac{n-1}2\,ds = \frac{\omega_{n-2}T_n^n}{n-1} \int_{(1-R)^2}^{r^2} F(t)^\frac{n-1}2\,\frac{dt}{2\sqrt t},
\]
As $R<1$ we have that $t_0<1$, and it will be very convenient to choose $r^2=t_0$. As $F$ is a smooth function, we can write
$F(t)=F(t_0)+\frac{F''(\tau_t)}2(t-t_0)^2$, with $\tau_t$ a point between $t$ and $t_0$. We denote by $M$ the maximum value of $|F''|$ in the interval $[(1-R)^2,(1+R)^2]$. So if $0<\delta <t_0-(1-R)^2$
\begin{eqnarray*}
\int_{(1-R)^2}^{t_0} F(t)^\frac{n-1}2\,\frac{dt}{2\sqrt t} &\geq& \int_{t_0-\delta}^{t_0} \left(F(t_0)+\frac{F''(\tau_t)}2\,(t-t_0)^2\right)^\frac{n-1}2\,\frac{dt}{2\sqrt t} \\&\geq& F(t_0)^\frac{n-1}2 \int_{t_0-\delta}^{t_0} \left(1-\frac M{2F(t_0)}\,\delta^2\right)^\frac{n-1}2\,\frac{dt}{2\sqrt t},
\end{eqnarray*}
the last inequality provided $\delta$ is small enough to make the last parenthesis positive. Choosing $\delta=\sqrt{\frac{4F(t_0)}{(n-1)M}}$, we will have $(1-\frac{M}{2F(t_0)}\delta^2)^\frac{n-1}2>c_0>0$ for $n$ large enough. Hence, the last expression is greater than or equal to
\[
c_0 \int_{t_0-\delta}^{t_0}\,\frac{dt}{2\sqrt t} F(t_0)^\frac{n-1}2\geq c_0 F(t_0)^\frac{n-1}2 \frac\delta{2\sqrt{t_0}}.
\]

\bigskip

Putting together all the estimations, we conclude
\[
\frac{\gamma_n(B(x_n,R_n)\cap B_{r_n})}{\gamma_n(B(x_n,R_n))} \geq \frac c{\sqrt{n-1}}.
\]
where $c>0$ may depend on $R$ and $r$, but not on $n$. Observe finally that $r$ is determined by $R$ via $t_0$, and that $R$ can be chosen arbitrarily in $(0,1)$.
\end{proof}

\bigskip

The proof of Theorem \ref{teorema.exponencial} follows the same scheme as the previous one, so we just hint the main steps. It is enough to show the following analogue of Lemma \ref{prueba.gaussiana}:

\bigskip

\begin{lema}\label{lema.exponencial.alfa}
There exist sequences $\{x_n\}, \{r_n\}$ and $\{R_n\}$, with $x_n\in\real^n$ and $r_n,\,R_n>0$ for $n\in\mathbb N$ such that
\begin{equation}\label{estimacion.maximal.exp.alpha}
\frac{\gamma_{\alpha,n}(B(x_n,R_n)\cap B_{r_n})}{\gamma_{\alpha,n}(B(x_n,R_n))}\geq \frac{c}{\sqrt n},
\end{equation}
and
\begin{equation}\label{cociente.bolas.centradas.exp.alpha}
\frac{\gamma_{\alpha,n}(B_{|x_n|})}{\gamma_{\alpha,n}(B_{r_n})} \geq c\,a^n,
\end{equation}
for some $a=a(\alpha)>1$.
\end{lema}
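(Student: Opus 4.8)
The plan is to follow the proof of Lemma \ref{prueba.gaussiana} line by line, replacing the computations special to the Gaussian density by those for $\e^{-|x|^\alpha}$. The needed preliminaries are: the function $G_\alpha(s)=\e^{-s^\alpha}s^{n-1}$ increases on $(0,T_{\alpha,n})$ and decreases afterwards, where now $T_{\alpha,n}=\bigl((n-1)/\alpha\bigr)^{1/\alpha}$; and, exactly as in Lemma \ref{estimaciones.bolas.centradas.medida.gaussiana}, if $\rho<T_{\alpha,n}$ then $\e^{-\rho^\alpha}|B_\rho|\le\gamma_{\alpha,n}(B_\rho)\le n\,\e^{-\rho^\alpha}|B_\rho|$ (the lower bound because $\e^{-s^\alpha}\ge\e^{-\rho^\alpha}$ on $(0,\rho)$; the upper bound because $G_\alpha$ increases there, so $\gamma_{\alpha,n}(B_\rho)\le\omega_{n-1}\rho\,G_\alpha(\rho)=n\,\e^{-\rho^\alpha}|B_\rho|$). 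I would then take $x_n=T_{\alpha,n}x_n'$ with $x_n'$ a unit vector, $r_n=rT_{\alpha,n}$ and $R_n=RT_{\alpha,n}$. For (\ref{cociente.bolas.centradas.exp.alpha}), for any $r\in(0,1)$, using $T_{\alpha,n}^\alpha=(n-1)/\alpha$,
\[
\frac{\gamma_{\alpha,n}(B_{|x_n|})}{\gamma_{\alpha,n}(B_{r_n})}\ \ge\ \frac{\e^{-T_{\alpha,n}^\alpha}\,|B_{T_{\alpha,n}}|}{n\,\e^{-r^\alpha T_{\alpha,n}^\alpha}\,|B_{rT_{\alpha,n}}|}\ =\ \frac{\e^{(1-r^\alpha)/\alpha}}{n}\Bigl(\frac{\e^{(r^\alpha-1)/\alpha}}{r}\Bigr)^{n},
\]
and the base in the last factor exceeds $1$ because $\psi(r):=\frac{r^\alpha-1}{\alpha}-\log r$ has $\psi(1)=0$ and $\psi'(r)=\frac{r^\alpha-1}{r}<0$ on $(0,1)$, so $\psi>0$ there; choosing any $a=a(\alpha)\in\bigl(1,\e^{(r^\alpha-1)/\alpha}/r\bigr)$ absorbs the factor $1/n$ and gives (\ref{cociente.bolas.centradas.exp.alpha}).

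For (\ref{estimacion.maximal.exp.alpha}) I would repeat the spherical-cap computation of the Gaussian proof. After the substitution $\rho=T_{\alpha,n}s$ the density on $\partial B_\rho$ becomes $\e^{-(n-1)s^\alpha/\alpha}$, and the cosine law again gives $\cos\beta_s=(1+s^2-R^2)/(2s)\ge\sqrt{1-R^2}$ for $s\in(1-R,1+R)$; with the same bounds for $\int_0^{\beta_s}\sin^{n-2}\theta\,d\theta$ used there, and writing $t=s^2$, everything reduces to
\[
\tilde F(t)=\Bigl(t-\bigl(\tfrac{1+t-R^2}{2}\bigr)^2\Bigr)\,\e^{-2t^{\alpha/2}/\alpha}=\frac{\bigl((1+R)^2-t\bigr)\bigl(t-(1-R)^2\bigr)}{4}\;\e^{-2t^{\alpha/2}/\alpha},
\]
which is smooth and positive on $\bigl((1-R)^2,(1+R)^2\bigr)$, vanishes at the endpoints, and hence attains its maximum at an interior point $t_0=t_0(\alpha,R)$; note also $\int_{(1-R)^2}^{(1+R)^2}\frac{dt}{2\sqrt t}=2R$. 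Bounding $\tilde F\le\tilde F(t_0)$ yields $\gamma_{\alpha,n}(B(x_n,R_n))\le\frac{2R\,\omega_{n-2}T_{\alpha,n}^n}{(n-1)\sqrt{1-R^2}}\tilde F(t_0)^{(n-1)/2}$. I would then fix $r=\sqrt{t_0}$ (the $r$ of the previous paragraph), so that $\gamma_{\alpha,n}(B(x_n,R_n)\cap B_{r_n})\ge\frac{\omega_{n-2}T_{\alpha,n}^n}{n-1}\int_{(1-R)^2}^{t_0}\tilde F(t)^{(n-1)/2}\frac{dt}{2\sqrt t}$, and run the Laplace-type argument of the Gaussian proof: with $\tilde F(t)=\tilde F(t_0)+\tfrac12\tilde F''(\tau_t)(t-t_0)^2$, $M=\max_{[(1-R)^2,(1+R)^2]}|\tilde F''|$, and $\delta_n=\sqrt{4\tilde F(t_0)/((n-1)M)}$, integration over $[t_0-\delta_n,t_0]$ gives, for large $n$, a lower bound $c_0\,\tilde F(t_0)^{(n-1)/2}\,\delta_n/(2\sqrt{t_0})$ with $c_0>0$ absolute. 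Dividing the two estimates, the common factor $\omega_{n-2}T_{\alpha,n}^n\,\tilde F(t_0)^{(n-1)/2}$ cancels, as do the $1/(n-1)$'s, and since $\delta_n\asymp n^{-1/2}$ one is left with exactly (\ref{estimacion.maximal.exp.alpha}). (For the finitely many small $n$ with $\delta_n\ge t_0-(1-R)^2$ one shrinks $c$, the relevant balls meeting in a set of positive $\gamma_{\alpha,n}$-measure.)

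The one genuinely new point, which I expect to demand the most care, is the verification that $t_0<1$; this is what makes the choice $r=\sqrt{t_0}\in(0,1)$ legitimate and closes the loop with (\ref{cociente.bolas.centradas.exp.alpha}). I would compute $(\log\tilde F)'(t)=\frac{1}{t-(1-R)^2}-\frac{1}{(1+R)^2-t}-t^{\alpha/2-1}$. On $[1,(1+R)^2)$ the first two terms decrease in $t$, and their sum at $t=1$ equals $\frac{1}{R(2-R)}-\frac{1}{R(2+R)}=\frac{2}{4-R^2}$, which is $<1$ for $R\in(0,1)$. If $\alpha\ge2$ the term $-t^{\alpha/2-1}$ also decreases and equals $-1$ at $t=1$, so $(\log\tilde F)'(t)\le\frac{2}{4-R^2}-1<0$ on $[1,(1+R)^2)$ for every such $R$; hence $\tilde F$ is strictly decreasing there and $t_0<1$. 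If $0<\alpha<2$ then $t^{\alpha/2-1}\ge(1+R)^{\alpha-2}$ on the interval, so $(\log\tilde F)'(t)\le\frac{2}{4-R^2}-(1+R)^{\alpha-2}$; the right-hand side tends to $\tfrac12-1<0$ as $R\to0^+$, so fixing $R=R(\alpha)\in(0,1)$ small enough makes it negative and again yields $t_0<1$. With $r=\sqrt{t_0}\in(0,1)$ now justified, (\ref{cociente.bolas.centradas.exp.alpha}) and (\ref{estimacion.maximal.exp.alpha}) hold simultaneously, which proves the lemma.
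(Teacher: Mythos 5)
Your proof is correct and follows essentially the same route as the paper's: the same choices $x_n=T_{\alpha,n}x_n'$, $r_n=rT_{\alpha,n}$, $R_n=RT_{\alpha,n}$, the same centred-ball estimates $\e^{-\rho^\alpha}|B_\rho|\le\gamma_{\alpha,n}(B_\rho)\le n\,\e^{-\rho^\alpha}|B_\rho|$, and the same spherical-cap/Laplace analysis of $F_\alpha$ carried over from the Gaussian case. The only point of divergence is the verification that the maximum point $t_0$ of $F_\alpha$ lies below $1$: the paper's comparison $G_\alpha(t)<G_0(t)<0$ yields this for every $R\in(0,1)$ and $\alpha>0$, whereas your bound on $(\log F_\alpha)'$ forces $R=R(\alpha)$ small when $0<\alpha<2$ --- harmless here, since the lemma only requires some admissible choice of the parameters.
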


\bigskip

\begin{proof}
We first deal with the proof of (\ref{cociente.bolas.centradas.exp.alpha}). The measure of a centred ball is $\gamma_{\alpha,n}(B_\rho) = \omega_{n-1}\int_0^\rho f_{\alpha,n}(t)\,dt$, where $f_{\alpha,n}(t)=\e^{-t^\alpha}t^{n-1}$. This function attains its maximum at the radius $T_{\alpha,n}=((n-1)/\alpha)^{1/\alpha}$, around which an essential part of the mass is concentrated. For $\rho<T_{\alpha,n}$ we have as well that
\begin{equation}\label{bolas.centradas.exp.alpha}
\e^{-\rho^\alpha}|B_\rho| \leq \gamma_\alpha(B_\rho) \leq n\,\e^{-\rho^\alpha}|B_\rho|.
\end{equation}
Take $r_n=r T_{\alpha,n}$ and $x_n=T_{\alpha,n} x_n'$ with $r<1$ and $x_n'$ a unit vector. Inequalities (\ref{bolas.centradas.exp.alpha}) imply that
$$
\frac{\gamma_{\alpha,n}(B_{|x_n|})}{\gamma_{\alpha,n}(B_{r_n})} \geq \frac {\e^{-T_{\alpha,n}^\alpha}\,|B_{T_{\alpha,n}}|} {n\,\e^{-r^\alpha T_{\alpha,n}^\alpha}\ r^n\,|B_{T_{\alpha,n}}|} = \frac {\e^{(1-r^\alpha)/\alpha}}n\left(\e^{(r^\alpha-1)/\alpha}\,\frac 1r\right)^n.
$$
It is easy to see that $\e^{(r^\alpha-1)/\alpha}/r>1$ by applying the inequality $e^x> 1+x$ to $e^{r^\alpha-1}$.

\bigskip

To prove (\ref{estimacion.maximal.exp.alpha}) take $R_n =R T_{\alpha,n}$ with $1-r<R<r$. Following the steps of the proof of (\ref{estimacion.maximal.gaussiana}), we have
\begin{eqnarray*}
\frac{\omega_{n-2}\,T_{\alpha,n}^n}{n-1} \int_{(1-R)^2}^{(1+R)^2} F_\alpha(t)^\frac{n-1}2\,\frac{dt}{2\sqrt t}&\leq& \gamma_{\alpha,n}(B(x_n,R_n))\\&\leq& \frac{\omega_{n-2}T_{\alpha,n}^n}{(n-1)\sqrt{1-R^2}} \int_{(1-R)^2}^{(1+R)^2} F_\alpha(t)^\frac{n-1}2\,\frac{dt}{2\sqrt t},
\end{eqnarray*}
where $F_\alpha(t) = \left(t-\left(\frac{1+t-R^2}2\right)^2\right) \e^{-2t^{\alpha/2}/\alpha}$. This function attains its maximum at a point $t_\alpha < 1$. This is a consequence of the following facts: $F_\alpha((1-R)^2) = F_\alpha((1+R)^2)=0$, $F_\alpha(t) > 0$ for $(1-R)^2 < t < (1+R)^2$, and $F_\alpha'(t) < 0$ whenever $1 \leq t<(1+R)^2$. To see the last assertion, write the derivative of $F_\alpha$ as
\begin{eqnarray*}
\frac\partial{\partial t}F_\alpha(t)&=&\left\{1-\frac{1+t-R^2}2-t^{\alpha/2-1} \left(t-\left(\frac{1+t-R^2}2\right)^2\right)\right\} \e^{-2t^{\alpha/2}/\alpha}\\&=:& G_\alpha(t)\e^{-2t^{\alpha/2}/\alpha}.
\end{eqnarray*}
Now it is clear that for $\alpha>0$ and $1<t<1+R^2$
\[
G_\alpha(t) < G_0(t)= t^{-1}\left(\frac{1+t-R^2}2\right)^2-\frac{1+t-R^2}2<0.
\]
All this was to justify that we can take $r=\sqrt t_\alpha<1$. Now we just follow the same steps as in the proof of Lemma \ref{lema.exponencial.alfa} to estimate

$$
\frac{\gamma_{\alpha,n}(B(x_n,R_n)\cap B_{r_n})}{\gamma_{\alpha,n}(B(x_n,R_n))}\geq \frac c{\sqrt{n-1}},
$$
where the constant $c$ may depend on $r$, $R$ and $\alpha$ but not on $n$.
\end{proof}

\bigskip

\subsection*{Acknowledgements}
This work started while the first author was visiting the University of Gothenburg. He is grateful to this institution for its hospitality. The authors would also like to thank Professor Fernando Soria for his help, in particular for pointing out the connection with the Hardy operator and his contribution in the proof of Proposition \ref{cociente.bolas}.

\bigskip


\begin{thebibliography}{HD}
\normalsize
\baselineskip=17pt
\bibitem{Aldaz} J.M. Aldaz, \emph{Dimension dependency of the weak type (1,1) bounds for maximal functions associated to finite radial measures.} Bull. Lond. Math. Soc. \textbf{39} (2007), 203--208.
\bibitem{Aldaz2} J.M. Aldaz, \emph{The weak type (1,1) bounds for the maximal function associated to cubes grow to infinity with the dimension.} Ann. of Math. (2) \textbf{173} (2011), no. 2, 1013-–1023.
\bibitem{AldazPerezLazaro} J.M. Aldaz, J. P\'erez L\'azaro, \emph{Dimension dependency of $L^p$ bounds for maximal functions associated to radial measures.} Positivity \textbf{15} (2011), 199--213.
\bibitem{Aubrun} G. Aubrun, \emph{Maximal inequality for high-dimensional cubes.} Confluentes Math. \textbf{1} (2009), no. 2, 169--179.
\bibitem{Bourgain1} J. Bourgain, \emph{On high-dimensional maximal function associated to convex bodies.} Amer. J. Math. \textbf{108} (1986), no. 6, 1467--1476.
\bibitem{Bourgain2} J. Bourgain, \emph{On the $L\sp p$-bounds for maximal functions associated to convex bodies in $R\sp n$.} Israel J. Math. \textbf{54} (1986), no. 3, 257--265.
\bibitem{Bourgain3} J. Bourgain, \emph{On dimension free maximal inequalities for convex symmetric bodies in $\real^n$.} Geometrical aspects of functional analysis (1985/86), 168--176, Lecture Notes in Math., \textbf{1267}, Springer, Berlin, 1987.
\bibitem{Carbery} A. Carbery, \emph{An almost-orthogonality principle with applications to maximal functions associated to convex bodies.} Bull. Amer. Math. Soc. (N.S.) \textbf{14} (1986), no. 2, 269--273.
\bibitem{C} A. Criado, \emph{On the lack of dimension free estimates in $L^p$ for maximal functions associated to radial measures.} Proc. Roy. Soc. Edinburgh Sect. A \textbf{140} (2010), no. 3, 541–-552.
\bibitem{Guzman} M. de Guzm\'an, \emph{Differentiation of Integrals in $\real^n$.} Lecture Notes in Math., Vol. 481. Springer-Verlag, Berlin-New York, 1975.
\bibitem{AdrianInfante} A. Infante, \emph{An\'alisis arm\'onico para medidas no doblantes: operadores maximales con respecto a la medida gaussiana}, Ph.D. thesis, Universidad Aut\'onoma de Madrid, 2005.
\bibitem{MenarguezSoria} M.T. Men\'arguez, F. Soria, \emph{Weak type (1,1) inequalities for maximal convolution operators.} Rend. Circ. Mat. Palermo (2) \textbf{41} (1992), no. 3, 342--352.
\bibitem{MenarguezSoria2} M.T. Men\'arguez, F. Soria, \emph{On the maximal operator associated to a convex body in $\real^n$.} Collect. Math. \textbf{3} (1992), 243--251.
\bibitem{Muller} D. M\"uller, \emph{A geometric bound for maximal functions associated to convex bodies.} Pacific J. Math. \textbf{142} (1990), no. 2, 297--312.
\bibitem{NaorTao} A. Naor, T. Tao \emph{Random martingales and localization of maximal inequalities}, J. Funct. Anal. \textbf{259} (2010), no. 3, 731–-779,
\bibitem{Stein1} E.M. Stein, \emph{The development of square functions in the work of A. Zygmund.} Bull. Amer. Math. Soc. (N.S.) \textbf{7} (1982), no. 2, 359--376.
\bibitem{SteinStromberg} E.M. Stein, J.O. Str\"omberg, \emph{Behavior of maximal functions in $R^n$ for large n.} Ark. Mat. \textbf{21} (1983), no. 2, 250--269.
\bibitem{SteinWeiss} E.M. Stein, G. Weiss, \emph{Introduction to Fourier analysis on Euclidean spaces.} Princ. Math. Ser., No. 32. Princeton University Press, Princeton, N.J., 1971.
\end{thebibliography}
\end{document}